  \newcommand{\numberlike}[2]{%
     \expandafter\def\csname c@#1\endcsname{%
         \expandafter\csname c@#2\endcsname}%
  }
  \def\DefaultNumberTheoremWithin{section}
  \theoremstyle{plain}
  \newtheorem{lemma}{Lemma}
     \numberwithin{lemma}{\DefaultNumberTheoremWithin}
  \newtheorem{theorem}{Theorem}
     \numberwithin{theorem}{\DefaultNumberTheoremWithin}
  \newtheorem*{maintheorem*}{Corollary of Main Theorem}
  \newtheorem{corollary}{Corollary}
     \numberwithin{corollary}{\DefaultNumberTheoremWithin}
  \newtheorem{proposition}{Proposition}
     \numberwithin{proposition}{\DefaultNumberTheoremWithin}
     \numberwithin{conjecture}{\DefaultNumberTheoremWithin}
  \theoremstyle{definition}
  \newtheorem{definition}{Definition}
     \numberwithin{definition}{\DefaultNumberTheoremWithin}
     \numberwithin{question}{\DefaultNumberTheoremWithin}
     \numberwithin{problem}{\DefaultNumberTheoremWithin}
  \theoremstyle{remark}
     \numberwithin{remark}{\DefaultNumberTheoremWithin}
  \newtheorem{example}{Example}
     \numberwithin{example}{\DefaultNumberTheoremWithin}
\def\CC{\mathbb{C}}
\def\KK{\mathbb{K}}
\def\ZZ{\mathbb{Z}}
\def\D{\mathcal{D}}
\def\F{\mathcal{F}}
\def\I{\mathcal{I}}
\def\L{\mathcal{L}}
\renewcommand{\S}{\mathcal{S}}
\def\T{\mathcal{T}}
\newcommand{\GF}[1]{\mathbb{F}_{#1}}
\newcommand{\tq}{\mathrel{{\ensuremath{\: : \: }}}}
\DeclareMathOperator{\rk}{rk}
\DeclareMathOperator{\Id}{\mathrm{Id}}
\DeclareMathOperator{\op}{op}
\DeclareMathOperator{\Cl}{Cl}
\DeclareMathOperator{\CB}{CB}
\DeclareMathOperator{\GL}{GL}
\DeclareMathOperator{\bool}{\Sigma} 
\newcommand\PD{\mathcal{PD}}
\DeclareMathOperator\FC{FC} 
\newcommand{\red}[1]{#1^*}
\newcommand{\gen}[1]{\left\langle #1\right\rangle}
\renewcommand{\max}[1][]{%
   \ifthenelse{ \equal{#1}{} }
      {\mathrm{max}}
      {\mathrm{max}_{#1}}
}
\renewcommand{\min}[1][]{%
   \ifthenelse{ \equal{#1}{} }
      {\mathrm{min}}
      {\mathrm{min}_{#1}}
}
\title[The common basis complex and the partial decomposition poset]{The common basis complex and the partial decomposition poset}
\author{Benjamin Br{\"u}ck}
\address{Universit{\"a}t M{\"u}nster \\
Institut f{\"u}r Mathematische Logik und Grundlagenforschung \\
48149 M{\=u}nster, Germany}
\email{benjamin.brueck@uni-muenster.de}
\author{Kevin I. Piterman}
\address{Philipps-Universit\"at Marburg \\
Fachbereich Mathematik und Informatik \\
35032 Marburg, Germany}
\email{piterman@mathematik.uni-marburg.de}
\author{Volkmar Welker}
\address{Philipps-Universit\"at Marburg \\
Fachbereich Mathematik und Informatik \\
35032 Marburg, Germany}
\email{welker@mathematik.uni-marburg.de}
\subjclass{Primary 05E45; Secondary 29D50, 20E42, 55U10, 57M07}
\begin{document}

\begin{abstract}
For a finite-dimensional vector space $V$, the common basis complex of $V$ is the simplicial complex whose vertices are the proper non-zero subspaces of $V$, and $\sigma$ is a simplex if and only if there exists a basis $B$ of $V$ that contains a basis of $S$ for all $S\in \sigma$.
This complex was introduced by Rognes in 1992 in connection with stable buildings.

In this article, we prove that the common basis complex is homotopy equivalent to the proper part of the poset of partial direct sum decompositions of $V$. Moreover, we establish this result in a more general combinatorial context, including the case of free groups, matroids, vector spaces with non-degenerate sesquilinear forms, and free modules over commutative Hermite rings, such as local rings or Dedekind domains.
\end{abstract}

\maketitle

\section{Introduction}
\label{sec:introduction}

In \cite{Rognes}, Rognes introduced the complex of common bases of a free module of rank $n$ over a ring $R$ with the invariant dimension property.
For $M = R^n$, the common basis complex $\CB(M)$ has as vertices the proper non-zero free summands of $M$, and there is a simplex $\{M_1,\ldots,M_r\}$ if and only if there exists a basis $B$ of $M$ that contains a basis of $M_i$ for all $1\leq i\leq r$.
Note that the common basis complex has dimension $2^n-3$.

The topological suspension of this complex corresponds to the \textit{stable building} of $R^n$ as defined by Rognes.
Its homology groups show up in a spectral sequence associated to Rognes' filtration of the algebraic K-theory spectrum $K(R)$.
Rognes showed that these homology groups vanish in degrees larger than $2n-2$, and conjectured that, when $R$ is local or Euclidean, the stable building should be $(2n-3)$-connected (so we find reduced homology only in degree $2n-2$). In other words, he conjectured that the common basis complex $\CB(M)$ should be $(2n-4)$-connected for a free $R$-module $M$ of rank $n$. 
If this was true, it would simplify the computation of $K(R)$ using Rognes' filtration (see \cite[p.~836]{Rognes}).

In this article, we relate the common basis complex to the poset of partial direct sum decompositions into free summands.
To be more precise, suppose $M$ is a free $R$-module of finite rank.
A \textit{partial direct sum decomposition} of $M$ is a subset $\{N_{i_1},\ldots,N_{i_k}\}$ of non-zero free summands belonging to a full decomposition $M = N_1\oplus \cdots \oplus N_r$.
We denote the collection of all such partial decompositions with order given by refinement by $\PD(M)$, and let $\red{\PD(M)} = \PD(M)\setminus \{\emptyset, \{M\}\}$ denote the proper part of this poset.
As a consequence of our main result \ref{thm:main} we get:

\begin{corollary}
\label{thm_hom_eq_cb_pd}
If $R$ is a commutative ring such that every stably free module is free, then $\CB(R^n)$ is $\GL_n(R)$-homotopy equivalent to $\red{\PD(R^n)}$.
\end{corollary}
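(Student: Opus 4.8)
The plan is to deduce \ref{thm_hom_eq_cb_pd} directly from \ref{thm:main} by exhibiting $R^n$, together with its poset of free direct summands and its $\GL_n(R)$-action, as an instance of the combinatorial framework for which \ref{thm:main} is established. So the substantive work is a verification of axioms rather than a new homotopy-theoretic argument.

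First I would recall the data required by \ref{thm:main} --- a ground object, a distinguished family of ``summands'' equipped with a submodular rank function, and notions of basis and of (partial) decomposition satisfying exchange-type axioms --- and then check these for $M = R^n$ with the free summands of $M$ as distinguished subobjects. The key point is that when every stably free $R$-module is free, the family of free summands of $R^n$ mimics the subspace lattice of a finite-dimensional vector space in every respect the axioms demand. Concretely I would check: every free summand $N \leq M$ has a free complement (if $N$ is free of rank $k$ and $N \oplus Q = M$, then $Q$ is finitely generated projective and $Q \oplus R^k \cong R^n$, so $Q$ is stably free, hence free of rank $n-k$); rank is additive over complements and over internal direct sums, yielding a well-defined submodular rank function; a basis of a free summand $N$ extends to a basis of $M$ by adjoining a basis of a free complement, and the augmentation and exchange axioms follow because one can always split off free summands of the expected rank and recombine their bases; and the partial decompositions of $M$ in the sense of the Introduction coincide with those of the abstract setting, ordered by refinement.

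Under these identifications, $\CB(R^n)$ is precisely the abstract common basis complex and $\red{\PD(R^n)}$ the proper part of the abstract partial decomposition poset, so \ref{thm:main} delivers a homotopy equivalence between them. For equivariance, $\GL_n(R) = \mathrm{Aut}_R(R^n)$ acts on $R^n$ and hence on its free summands, bases, and decompositions, preserving rank; this is an automorphism of the entire combinatorial structure, and since the equivalence produced by \ref{thm:main} is natural with respect to isomorphisms of the data, it is automatically $\GL_n(R)$-equivariant. A commutative ring in which stably free modules are free is in particular a Hermite ring, so this covers fields, local rings, and Dedekind domains; in the field case it recovers the homotopy equivalence for the common basis complex of a finite-dimensional vector space stated in the abstract.

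The main obstacle I expect is the verification of the exchange and augmentation axioms for free summands over a Hermite ring: unlike for vector spaces one cannot pass freely to arbitrary submodules, so each manipulation of a configuration of free summands lying in a common basis --- enlarging it, splitting it, taking complements --- must stay within free summands of the correct rank, which is exactly where the hypothesis ``stably free implies free'' is used repeatedly. A secondary point needing care is confirming that the homotopy equivalence of \ref{thm:main}, a priori only asserted abstractly, is genuinely $\GL_n(R)$-equivariant; this should follow from its construction being functorial in the combinatorial data, provided one checks that no non-canonical choices enter.
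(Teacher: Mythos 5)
Your proposal follows essentially the paper's own route: \ref{thm_hom_eq_cb_pd} is deduced from \ref{thm:main} applied to the $\GL_n(R)$-poset $\S(R^n)$ of proper non-zero summands with the frames $\F(R^n)$ given by decompositions into rank-one free summands, and the only substantive hypothesis to verify is the extension property (EP), which the paper establishes in \ref{prop:EPringsWithFC} by exactly the mechanism you describe --- ``stably free implies free'' yields free complements of free summands, so bases of nested partial decompositions extend to a common frame --- while $\GL_n(R)$-equivariance is immediate because the action preserves $\S$ and $\F$ and \ref{thm:main} is already stated equivariantly. One correction of framing: \ref{thm:main} demands no rank function nor matroid-style exchange/augmentation axioms (vector-level exchange for subsets of bases in fact fails already over $\ZZ$, e.g.\ for $\{(1,2),(1,3)\}$ versus $\{(1,0)\}$); the frame condition together with (EP) is all that is required, and your nested-summand basis-extension argument is precisely the (EP) verification.
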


Since $\red{\PD(R^ n)}$ is $(2n-3)$-dimensional, 
and local rings and Euclidean rings satisfy the hypothesis of this corollary,
we get the following
restatement of Rognes' conjecture:

\vspace{0.2cm}

\noindent
\textbf{Rognes' Conjecture.} If $R$ is local or Euclidean, then $\red{\PD(R^n)}$ is spherical.

\vspace{0.2cm}

The conjecture was recently established by Miller, Patzt and Wilson \cite{MPW} when $R$ is a field. Here, \ref{thm_hom_eq_cb_pd} yields:

\begin{corollary}
    \label{cor:field}
    For a field $R$, the complex $\CB(R^ n)$ and the poset $\red{\PD(R^n)}$ are $\GL_n(R)$-homotopy equivalent,
    and they have the homotopy type of a wedge of
    spheres of dimension $2n-3$.
    In particular, the free abelian groups $\widetilde{H}_{2n-3}(\CB(R^ n);\ZZ)$ and $\widetilde{H}_{2n-3}(\red{\PD(R^n)};\ZZ)$ are isomorphic as $\GL_n(R)$-modules.
\end{corollary}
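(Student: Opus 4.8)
The plan is to deduce the statement by combining Corollary~\ref{thm_hom_eq_cb_pd} with the resolution of Rognes' conjecture for fields obtained by Miller, Patzt and Wilson. First I would record that a field $R$ trivially satisfies the hypothesis of \ref{thm_hom_eq_cb_pd}: every $R$-module is free, so in particular every stably free $R$-module is free. Hence \ref{thm_hom_eq_cb_pd}, which is itself a consequence of the main theorem \ref{thm:main}, already provides a $\GL_n(R)$-homotopy equivalence $\CB(R^n)\simeq \red{\PD(R^n)}$. This settles the first assertion.

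For the homotopy type, I would invoke \cite{MPW}: there it is shown that, for a field $R$, the poset $\red{\PD(R^n)}$ is spherical, i.e.\ homotopy equivalent to a wedge of spheres of dimension equal to $\dim \red{\PD(R^n)} = 2n-3$. Transporting this across the equivalence of the previous paragraph shows that $\CB(R^n)$ is likewise homotopy equivalent to a wedge of $(2n-3)$-spheres. (One could also argue directly on the $\CB$ side once one knows its reduced homology is free and concentrated in a single degree, but routing everything through $\red{\PD(R^n)}$ is cleanest and avoids re-deriving its dimension.)

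Finally, for the statement about $\GL_n(R)$-modules: a $\GL_n(R)$-equivariant homotopy equivalence induces a $\GL_n(R)$-equivariant isomorphism of reduced homology groups in every degree. Since both spaces are wedges of $(2n-3)$-spheres, their reduced homology vanishes outside degree $2n-3$, where it is a free abelian group; the induced map in that degree is then the asserted isomorphism $\widetilde{H}_{2n-3}(\CB(R^n);\ZZ)\cong \widetilde{H}_{2n-3}(\red{\PD(R^n)};\ZZ)$ of $\GL_n(R)$-modules.

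The only genuine point requiring care — and hence the main obstacle — is that the equivalence produced by \ref{thm:main} and \ref{thm_hom_eq_cb_pd} must be honestly $\GL_n(R)$-equivariant, not merely equivariant up to homotopy, for the induced homology isomorphism to be a morphism of $\GL_n(R)$-modules. This is exactly why the proof of the main theorem is arranged to yield an equivariant comparison (via $\GL_n(R)$-equivariant poset maps, or a zig-zag thereof, together with equivariant versions of Quillen's fibre lemma and the nerve lemma); granting that, the corollary is formal.
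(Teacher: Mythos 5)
Your overall skeleton matches the paper's: a field is a Hermite (indeed local) ring, so \ref{thm_hom_eq_cb_pd} applies and gives the $\GL_n(R)$-equivariant equivalence, and since the maps $u$ and $m$ in the proof of \ref{thm:main} are honest $\GL_n(R)$-equivariant poset maps, the induced isomorphism on $\widetilde{H}_{2n-3}$ is one of $\GL_n(R)$-modules, exactly as you argue. The gap is in the middle step: you cite \cite{MPW} for the assertion that $\red{\PD(R^n)}$ is spherical, but that is not what Miller--Patzt--Wilson prove. They establish Rognes' conjecture in its original form, namely a connectivity statement for the stable building, equivalently that the reduced homology of $\CB(R^n)$ is concentrated in degree $2n-3$; they do not consider $\red{\PD(R^n)}$, and indeed the restatement of the conjecture in terms of $\red{\PD(R^n)}$ only becomes available through \ref{thm_hom_eq_cb_pd} of this paper. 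Moreover, homology concentration for $\CB(R^n)$, which has dimension $2^n-3$, does not by itself yield a wedge of $(2n-3)$-spheres; the paper explicitly flags the wedge-of-spheres statement as \emph{not} being in the literature and as requiring an extra argument.

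The missing step is short but must be supplied: either, as the paper does, note that $\widetilde{H}_{2n-3}(\CB(R^n);\ZZ)$ is free because it arises as a kernel in a free chain complex \cite{Rognes23}, so that homology concentration in a single degree (from \cite{MPW}) together with simple connectivity gives a wedge of $(2n-3)$-spheres, which is then transported to $\red{\PD(R^n)}$ by the equivalence; or, reversing your direction of transport, move the homology concentration across the equivalence to $\red{\PD(R^n)}$, observe that this poset has dimension $2n-3$ by \ref{lem_dimensions}, so its top homology is automatically free, and conclude sphericity there before transporting back to $\CB(R^n)$. With either patch your argument is complete and agrees in substance with the paper's; as written, it assumes (with an incorrect attribution) precisely the point that needs proof.
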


In the case that $R$ is a finite field, the $\GL_n(R)$-module structure of $\widetilde{H}_{2n-3}(\CB(R^ n);\CC)\cong \widetilde{H}_{2n-3}(\red{\PD(R^n)};\CC)$ is known: 
unpublished work of Hanlon, Hersh and Shareshian \cite{HHS} describes it explicitly, see Eq. (\ref{eq:rep}) in \ref{sec_ex_free_modules} for details. Additionally, they show that $\red{\PD(\GF{q}^n)}$ is not only spherical but Cohen-Macaulay.

To the best of our knowledge, the fact that for general fields $R$ the complex $\CB(R^ n)$ has the homotopy type of a wedge of spheres is 
not stated in the literature. 
However, Jeremy Miller pointed out to us 
that it actually follows from the results of Rognes \cite{Rognes23} and Miller--Patzt--Wilson \cite{MPW}: as explained in \cite[Section 10]{Rognes23},
the homology of $\CB(R^n)$ arises from a free chain complex where the homology group of degree $2n-3$ is free since it is the kernel of the last map of this chain complex.
Hence, if homology is concentrated in a single dimension, as was shown for $R$ a field in \cite{MPW}, the complex $\CB(R^ n)$ is homotopy equivalent to a wedge of spheres of that dimension.
Jeremy Miller also informed us about an alternative proof that $\CB(R^ n)$ 
has the homotopy type of a $(2n-3)$-dimensional simplicial complex.

\ref{thm:main} also implies results analogous to \ref{thm_hom_eq_cb_pd} in the following
situations:

\begin{enumerate}
    \item If $M$ is a matroid, then a partial decomposition is a non-empty subset of a direct sum decomposition of $M$ into proper flats.
    \item If $V$ is a vector space with a non-degenerate $\sigma$-sesquilinear form, one can define a partial decomposition as a non-empty subset of a direct sum decomposition of $V$ into pairwise orthogonal non-degenerate non-zero proper subspaces.
    \item Similarly, if $V$ is a non-degenerate unitary or symplectic space, then we can also consider direct sum decompositions into non-zero totally isotropic subspaces.
    \item If $F_n$ is the free group of rank $n$, then a partial decomposition is a set of non-trivial proper free factors $\{H_1,\ldots,H_k\}$ such that $F_n = H_1*\ldots *H_k*K$ for some free factor $K$.
\end{enumerate}

In these cases, we can also define a common basis complex, which naturally generalizes the case of free modules.
The general definitions can be found in \ref{sec:preliminaries} and are spelled out for the cases above in \ref{sec:examples}. 
We prove the following:

\begin{corollary} \label{cor:remainingcases}
  Let $X$ be a matroid, a finite-dimensional vector space with a non-de\-gen\-erate $\sigma$-sesquilinear form, or a free group of finite rank.
  Then the common basis complex $\CB(X)$ is homotopy equivalent to the poset $\red{\PD(X)}$ of partial decompositions ordered by refinement.
\end{corollary}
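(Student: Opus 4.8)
The plan is to deduce this from \ref{thm:main}: for each of the three families one has to check that the combinatorial structure attached to $X$ --- spelled out concretely in \ref{sec:examples} --- is of the type to which \ref{thm:main} applies, i.e.\ that it satisfies the axioms of \ref{sec:preliminaries}. Since the common basis complex $\CB(X)$ and the poset $\red{\PD(X)}$ of partial decompositions are, by definition, the complex and the poset built from that structure, \ref{thm:main} then immediately yields $\CB(X)\simeq\red{\PD(X)}$. Thus all of the topology is contained in \ref{thm:main}, and the remaining work is to verify, family by family, a short list of exchange-type (Steinitz-like) axioms relating bases of the ambient object to its \emph{admissible subobjects} --- i.e.\ the objects that serve as vertices of $\CB$ and as parts of decompositions.

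For a matroid $M$ on ground set $E$, the admissible subobjects are the proper non-empty flats occurring in a direct sum decomposition of $M$ (equivalently, the proper non-trivial separators), a basis is a basis of $M$, and a basis $B$ contains a basis of a flat $F$ exactly when $|B\cap F|=\rk(F)$. The axioms reduce to standard matroid facts: the restriction of a matroid to a flat is again a matroid; for a decomposition $M=M|F_1\oplus\cdots\oplus M|F_r$ into flats the $F_i$ partition $E$, a union of bases of the $M|F_i$ is a basis of $M$, and conversely every basis of $M$ restricts to a basis of each $M|F_i$; and refinements of such decompositions are governed by the semimodular lattice of flats. Assembling these yields the replacement property that \ref{thm:main} requires, so it applies.

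For a finite-dimensional vector space $V$ with a non-degenerate $\sigma$-sesquilinear form, the admissible subobjects are the proper non-zero non-degenerate subspaces (and, in the unitary or symplectic setting, the non-zero totally isotropic subspaces), a basis is one adapted to an orthogonal decomposition into one-dimensional non-degenerate subspaces (respectively a hyperbolic basis), and a basis $B$ contains a basis of a subobject $W$ when $B\cap W$ spans $W$. Here the axioms follow from the structure theory of such forms: a non-degenerate subspace $W$ has a unique orthogonal complement, $V=W\oplus W^\perp$ with $W^\perp$ again non-degenerate, so orthogonal decompositions refine freely, while in the totally isotropic case one uses the existence of complementary totally isotropic subspaces together with Witt's extension theorem to move bases into position. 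These give the exchange axiom, and \ref{thm:main} applies once more.

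Finally, for the free group $F_n$ the admissible subobjects are the non-trivial proper free factors, a basis is a free generating set of $F_n$, a basis $B$ contains a basis of a free factor $H$ when some subset of $B$ is a free basis of $H$, and a partial decomposition is a set $\{H_1,\dots,H_k\}$ with $F_n=H_1*\cdots*H_k*K$ for some free factor $K$. The relevant inputs are Grushko's theorem (rank is additive over free products, so these decompositions have bounded length and $\red{\PD(F_n)}$ is finite), the Kurosh subgroup theorem (free factors of free groups are free and meet free factorizations in a controlled way), the fact that a free factor of a free factor is again a free factor, and a replacement statement for compatible free-product decompositions. I expect this to be the main obstacle: the combinatorics of free-product splittings lacks the linear-algebraic structure present in the other cases, and verifying the Steinitz-type exchange axiom for free factors is the delicate step. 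The natural tools are the action of $F_n$ on the Bass--Serre tree of a splitting, or equivalently Stallings foldings of graphs, which let one realise compatible free factorizations by collapsible subforests and thereby carry out the exchanges. Once the axioms are established, \ref{thm:main} gives $\CB(X)\simeq\red{\PD(X)}$ in all three families; we make no equivariance claim here, since \ref{thm:main} is not itself stated equivariantly, although the constructions are natural enough that automorphism-equivariance follows in each case when wanted.
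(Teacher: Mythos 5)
Your overall strategy is the same as the paper's: everything topological is delegated to \ref{thm:main}, and what remains is to check, family by family, that the data described in \ref{sec:examples} consists of genuine frames in the sense of \ref{def_frame} and satisfies (EP). Your sesquilinear-form paragraph is essentially the paper's argument (orthogonal complementation, resp.\ extension of partial symplectic bases). However, your matroid case verifies the axioms for the wrong objects. In the paper's setup a (partial) decomposition of a matroid is a rank-additive family of flats obtained from a frame, i.e.\ from the rank-one closures of a basis; these flats are \emph{not} in general separators, they need \emph{not} partition the ground set, and it is false that every basis of $M$ restricts to a basis of each part (that failure is exactly why $\CB$ and (EP) are non-trivial). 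Concretely, the uniform matroid $U_{n,k}$ of \ref{ex_uniform_matroid} is connected, so it has no proper separators and your poset would be empty, whereas $\red{\PD(U_{n,k})}$ has dimension $2k-3$. With your identification, (EP) becomes vacuous but the homotopy equivalence you obtain is not the statement of the corollary. The correct verification is the Steinitz/augmentation argument: given $\sigma\leq\sigma'$, choose a basis of each flat of $\sigma$, augment it inside the flat of $\sigma'$ containing it, then augment the union to a basis of $M$; the resulting frame satisfies $P_1(\sigma,\tau)$ and $P_1(\sigma',\tau)$.

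For free groups you explicitly leave the decisive step (your ``replacement statement for compatible free-product decompositions'') unproven, offering only a gesture towards Bass--Serre trees or Stallings foldings, and the input you do cite --- ``a free factor of a free factor is again a free factor'' --- is the easy transitivity direction, which is not what is needed. The required facts are: $\check{\FC_n}$ is a lattice (Kurosh), and if $K\leq H\leq F_n$ are subgroups with $K$ and $H$ free factors of $F_n$, then $K$ is a free factor of $H$. Applying this to the join $K$ of the elements of $\sigma$ lying in a given $H\in\sigma'$ shows that they form a partial decomposition of $H$, after which (EP) follows by choosing rank-one decompositions and extending, exactly as in the $\ZZ^n$ model of \ref{ex_EP}; without some such argument the free-group case remains a genuine gap. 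Two minor points: your closing remark that \ref{thm:main} is not stated equivariantly is inaccurate (it asserts a $G$-homotopy equivalence, which is how the paper gets equivariance in these examples, though the corollary itself does not need it), and in the symplectic non-degenerate case the minimal non-degenerate subspaces are $2$-dimensional, so the frames there are not made of lines.
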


Moreover, if $X$ comes with a natural group $G$  of automorphisms acting on $\CB(X)$ and $\red{\PD(X)}$, then the homotopy equivalence above is $G$-equivariant.
This is the case for example for the isometry group of a vector space with a non-degenerate form acting on the posets of non-degenerate subspaces and totally isotropic subspaces, or for the automorphism group of the free group acting on the poset of free factors.
See \ref{thm:main} for more details.

If $X$ is one of the objects above, we may ask whether $\red{\PD(X)}$ is spherical, that is,~a wedge of spheres of the maximal possible dimension.
This would imply that Rognes' conjecture, as stated above, also holds in these other contexts.
However, this is not the case in general.
For instance, if $M$ is the uniform matroid of rank $k$ on $n$ points, with $k\leq n-1$, then $\red{\PD(M)}$ is homotopy equivalent to a non-trivial wedge of spheres of dimension $k-1$, which is smaller than $\dim(\red{\PD(M)}) = 2k-3$ for $k\geq 3$.
A similar situation arises if $X$ is a finite-dimensional vector space $V$ with a non-degenerate Hermitian, orthogonal or symplectic form, where the poset of partial orthogonal decompositions into non-degenerate subspaces is homotopy equivalent to the poset of proper orthogonal decompositions of $V$, which has smaller dimension and is not contractible in general.
See \ref{sec:examples} for more details on these examples.

\bigskip

\textbf{Acknowledgments.}
The first author was supported by the Deutsche Forschungsgemeinschaft [Project-ID 427320536 -- SFB 1442], as well as by Germany’s Excellence Strategy [EXC 2044 -- 390685587].
The second author was supported by the Alexander von Humboldt Stiftung and the Fonds voor Wetenschappelijk Onderzoek – Vlaanderen [grant 12K1223N].

We thank Aldo Conca, Jeremy Miller and Jennifer C.~H.~Wilson for helpful comments and suggestions.
We are also grateful to an anonymous referee for insightful comments that helped to improve the presentation of this article.

\section{Preliminaries and Statement of the Main Result}
\label{sec:preliminaries}

In this section, we present the notation and definitions we will use throughout this paper. 
The definitions around decompositions and frames are adapted from \cite{PW} 
avoiding the more technical categorical approach taken there.
We also state our main result, \ref{thm:main}, at the end of this section.

\subsection{Definitions of complexes and posets}

Let $\S$ be a (not necessarily finite) poset.
We write $\check{\S}$ for the poset $\S \cup \{0_\S,1_\S\}$
where $0_\S < x < 1_\S$ for all $x \in \S$.
For $x\in \S$, we define the subposet $\S_{\leq x} = \big\{\,y\in \S\,\tq\, y\leq x\,\big\}$.
Analogously defined are the subposets $\S_{\geq x}$, $\S_{<x}$ and $\S_{> x}$.
The order complex of a poset $\S$ is the simplicial complex $\Delta \S$ with simplices the linearly ordered subsets of $\S$.
When we speak of posets in terms of topological spaces we identify the poset with its order complex.
The height of an element $x\in \S$ is the dimension of the order complex of $\check{\S}_{\leq x}$, and we denote it by $h(x)$.
The height $h(\S)$ of $\S$ is the height of $1_\S$ in $\check{\S}$ or, equivalently, the dimension of the order complex of $\check{\S}$. 
For $x , y \in \S$, if the subposet $\S_{\geq x} \cap \S_{\geq y}$ has
a unique minimal element $z$ then we call $z$ the join of $x$ and $y$ and write $x \vee y$ for $z$. 
If the subposet $\S_{\leq x} \cap \S_{\leq y}$ has a unique maximal element
$z$ then we call $z$ the meet of $x$ and $y$ and write 
$x \wedge y$ for $z$. The meet is associative, that is,~if $x,y,z \in \S$ and both $x\wedge y$ and $x \wedge y \wedge z$ exist, then
\begin{equation}
    \label{eq_associativity_join}
    (x\wedge y) \wedge z = x \wedge y \wedge z = x\wedge (y \wedge z).
\end{equation}
Finally, a subset of $\S$ of pairwise
incomparable elements is called an antichain.

\begin{definition}
\label{def_frame}
  Let $\S$ be a poset.
  For a finite non-empty subset $\tau \subseteq\S$, let $\bool(\tau)$ be the subposet of all joins of non-empty subsets of $\tau$ that exist in $\S$.
  We say that $\tau$ is a \textit{frame} if it is an antichain such that all subsets of $\bool(\tau)$ that have a lower bound in $\S$ have a meet in $\S$ and this meet lies in $\bool(\tau)$,
  \begin{equation}
  \label{eq_def_frame}
      \forall \sigma\subseteq \bool(\tau):
      \left[\exists x\in \S :  x\leq y \,\forall y \in \sigma  \implies \bigwedge_{y\in\sigma} y \in  \Sigma(\tau) \right].
  \end{equation}
\end{definition}

Note that this condition in particular implies that for a frame $\tau$, the subposet $\bool(\tau)$ is the proper part of a lattice and closed under taking (existing) meets in $\S$ (take $x = \bigwedge_{y\in\sigma} y$ in Eq. (\ref{eq_def_frame})). The condition also
implies that no two different elements of a frame $\tau$ have a lower bound in $\S$.
In most of our examples, including the setup of Rognes' conjecture, $\tau$ will be a set of minimal elements of $\S$ and $\bool(\tau)$ will be isomorphic to the poset of non-empty faces of the boundary of a simplex. Since these assumptions do not simplify our proof and for potential future applications, we formulate our result using the general definition above. For an example where $\bool(\tau)$ is not
the poset of proper faces of a simplex, see \ref{sec_ex_isotropic_subspaces}.

We think of frames as variants of ``bases'' or ``building blocks" for the elements of $\S$. To describe subsets of $\S$ that are ``generated'' by such a basis, we consider the following properties for $\sigma\subseteq \S$ and $\tau$ a frame:
\begin{enumerate}[label={$P_{\theenumi}(\sigma,\tau)$:\,}, leftmargin=2.3cm]
\item \label{item_span_of_frame} For all $y\in \sigma$, we have $y\in \bool(\tau)$.
\item \label{item_unique_containment} For all $x\in \tau$, there is at most one $y\in \sigma$ such that $x\leq y$.
\item \label{item_complete_decomp} For all $x\in \tau$, there is at least one $y\in \sigma$ such that $x\leq y$.
\end{enumerate}
\noindent
We trivially see that $P_1(\tau,\tau)$ and $P_3(\tau,\tau)$ hold. Also $P_2(\tau,\tau)$ holds since $\tau$ is an antichain.
  
\begin{definition}
\label{def_decomp_complexes}
Let $\F$ be a non-empty set of frames in $\S$. We say that $\sigma\subseteq \S$ is:
\begin{itemize}
\item \emph{basis compatible} if there is $\tau\in \F$ such that $P_1(\sigma,\tau)$ holds;
\item a \emph{partial decomposition} if there is $\tau\in \F$ such that $P_1(\sigma,\tau)$ and $P_2(\sigma,\tau)$ hold;
\item a \emph{decomposition} if there is $\tau\in \F$ such that $P_1(\sigma,\tau)$, $P_2(\sigma,\tau)$ and $P_3(\sigma,\tau)$ hold.
\end{itemize}
\end{definition}

We write $\CB(\S,\F)$ for the poset of all $\sigma \subseteq \S$ that are basis compatible together with the partial ordering given by containment.
We call $\CB(\S,\F)$ the \emph{common basis complex} (of $\S$ with respect to $\F$). It specializes to the common basis complex by Rognes \cite{Rognes} described in \ref{sec:introduction} when $\S$ is the poset of non-zero proper summands of $R^n$ and $\F$ the collection of direct sum decompositions of $R^n$ into rank one free summands (see \ref{sec:examples}).

We write $\red{\PD(\S,\F)}$ and $\red{\D(\S,\F)}$ for the posets of all non-empty subsets $\sigma \subseteq \S$ that are partial decompositions and decompositions, respectively, ordered by refinement.
That is, $\sigma\leq \sigma'$ if and only if for all $x\in \sigma$ there is $y\in \sigma'$ such that $x\leq y$ in $\S$.

The following lemma summarizes some elementary properties of these posets.

\begin{lemma}
Let $\S$ be a poset and $\F$ a non-empty set of frames in $\S$.
    \begin{enumerate}
        \item The elements of $\red{\PD(\S,\F)}$ are exactly the subsets of elements in $\red{\D(\S,\F)}$.
        \item If $\tau\in \F$ and $\nu\subseteq \tau$ then $\nu\in \CB(\S,\F)$, $\nu\in \red{\PD(\S,\F)}$, and $\tau \in \red{\D(\S,\F)}$.
        \item $\CB(\S,\F)$ is a simplicial complex.
        \item If $m \coloneqq \sup \big\{\, |\tau| \, \tq \,\tau\in \F\,\big\}$ is finite, then $$m-1\leq \dim\big(\,\CB(\S,\F)\,\big) = \sup \big\{\,|\bool(\tau)| \,\tq\, \tau \in \F\, \big\}-1 \leq 2^m-2.$$
        \item If for all $\tau\in \F$, the poset $\bool(\tau)$ is the poset of non-empty faces of the boundary of an $(m-1)$-simplex, then $\dim(\CB(\S,\F)) = 2^m-3$.
    \end{enumerate}
\end{lemma}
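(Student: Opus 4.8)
The plan is to verify the five statements directly from the definitions, using only the antichain property of frames, the description of $\bool(\tau)$, and the inheritance of $P_1,P_2,P_3$ under passing to subsets. Statement (3) is immediate: if $P_1(\sigma,\tau)$ holds and $\sigma'\subseteq\sigma$, then every element of $\sigma'$ still lies in $\bool(\tau)$, so $P_1(\sigma',\tau)$ holds; hence $\CB(\S,\F)$ is closed under taking subsets, and since each $\tau\in\F$ is finite (so $|\bool(\tau)|\le 2^{|\tau|}-1$) every basis compatible set is finite, so $\CB(\S,\F)$ is a simplicial complex, with vertex set $\bigcup_{\tau\in\F}\bool(\tau)$. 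For (2) one takes the frame $\tau$ itself as witness throughout: since $\tau\subseteq\bool(\tau)$ (each $x\in\tau$ is the join of the one-element subset $\{x\}$), $P_1(\nu,\tau)$ holds for every $\nu\subseteq\tau$, so $\nu\in\CB(\S,\F)$; moreover $\tau$ being an antichain forces $P_2(\nu,\tau)$ (if $x\le y$ with $x,y\in\tau$ then $x=y$), so $\nu\in\red{\PD(\S,\F)}$ whenever $\nu\neq\emptyset$; and $P_1(\tau,\tau)$, $P_2(\tau,\tau)$, $P_3(\tau,\tau)$ all hold by the remarks preceding \ref{def_decomp_complexes}, so $\tau\in\red{\D(\S,\F)}$.

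For (4) the key observation is that $\bool(\tau)$ is itself basis compatible, as $P_1(\bool(\tau),\tau)$ is trivially true; hence $\bool(\tau)\in\CB(\S,\F)$ gives $\dim\CB(\S,\F)\ge|\bool(\tau)|-1$, while every basis compatible $\sigma$ is contained in some $\bool(\tau)$, giving the reverse inequality. This proves $\dim\CB(\S,\F)=\sup_{\tau\in\F}|\bool(\tau)|-1$. Combining this with $|\tau|\le|\bool(\tau)|\le 2^{|\tau|}-1$ — the lower bound because the $|\tau|$ joins of one-element subsets are pairwise distinct ($\tau$ being an antichain), the upper bound because $\tau$ has $2^{|\tau|}-1$ non-empty subsets — and with the monotonicity of $t\mapsto 2^t-1$, one gets $m-1\le\dim\CB(\S,\F)\le 2^m-2$. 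For (5), the poset of non-empty faces of the boundary of an $(m-1)$-simplex is the poset of non-empty proper subsets of an $m$-element set; it has $2^m-2$ elements, and its minimal elements (the singletons) number $m$. So if every $\bool(\tau)$ has this form, then $|\bool(\tau)|=2^m-2$ for all $\tau\in\F$ and the formula above gives $\dim\CB(\S,\F)=2^m-3$.

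Statement (1) is the only one requiring a construction, and I expect the verification of $P_2$ below to be the one mildly delicate point. One direction is immediate: if $\delta$ is a decomposition witnessed by $\tau\in\F$ and $\emptyset\neq\sigma\subseteq\delta$, then $P_1(\sigma,\tau)$ is inherited since $\sigma\subseteq\bool(\tau)$, and $P_2(\sigma,\tau)$ is inherited since the ``at most one'' condition only becomes easier on a smaller set; so $\sigma\in\red{\PD(\S,\F)}$. For the converse, given a partial decomposition $\sigma$ witnessed by $\tau\in\F$, I would set
\[
  \delta \;=\; \sigma \;\cup\; \big\{\,x\in\tau \tq \text{no } y\in\sigma \text{ satisfies } x\le y\,\big\},
\]
which is non-empty and contains $\sigma$. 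Here $P_1(\delta,\tau)$ holds because the adjoined elements lie in $\bool(\tau)$, and $P_3(\delta,\tau)$ holds by construction. It remains to check $P_2(\delta,\tau)$: suppose $x\in\tau$ satisfies $x\le y_1$ and $x\le y_2$ with $y_1,y_2\in\delta$. If one of the $y_i$ is an adjoined element, then $x\le y_i$ with $x,y_i\in\tau$ forces $y_i=x$ by the antichain property, and then — since $x$ was adjoined precisely because no element of $\sigma$ lies above it — the other $y_j$ cannot be in $\sigma$, so $y_j=x$ as well; if both $y_1,y_2\in\sigma$, then $P_2(\sigma,\tau)$ gives $y_1=y_2$. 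Hence $\delta$ is a decomposition containing $\sigma$, so $\sigma$ is a subset of an element of $\red{\D(\S,\F)}$, completing (1).
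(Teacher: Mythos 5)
Your proof is correct: the paper states this lemma without proof, as an elementary consequence of the definitions, and your verification is exactly the intended routine argument (items (2)--(5) reduce to $\tau\subseteq\bool(\tau)\in\CB(\S,\F)$, heredity of $P_1$, and the count $|\tau|\leq|\bool(\tau)|\leq 2^{|\tau|}-1$). The one step needing an actual construction, item (1), is handled correctly: completing a partial decomposition $\sigma$ witnessed by $\tau$ to a decomposition by adjoining the elements of $\tau$ not below any member of $\sigma$, with the antichain property of $\tau$ ensuring $P_2$ for the enlarged set, is precisely the right argument.
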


In general, the dimensions of $\red{\D(\S,\F)}$ and $\red{\PD(\S,\F)}$ depend heavily on the structure of $\S$. We give bounds on these under an additional assumption in \ref{lem_dimensions}.

\begin{example}
\label{ex_Zn}
Let $\S(\ZZ^n)$ be the poset of non-trivial, proper direct summands of $\ZZ^n$. This poset has height $n$.
Let $\F$ be the set of all collections $\{L_1, \ldots, L_n\}$ of lines that form a direct sum decomposition $\ZZ^n = L_1\oplus  \cdots \oplus L_n$.
For all $\tau\in \F$, the poset $\Sigma(\tau)$ is isomorphic to the poset of non-empty faces of an $(n-1)$-simplex, given by all spans of non-empty proper subsets of $\tau$.
Note that not all collections of $n$ linearly independent lines satisfy this condition. For example $L_1 = \gen{(1,0)}$ and $L_2 = \gen{(1,2)}$ are linearly independent lines in $\ZZ^2$, $L_1\vee L_2 = \ZZ^2$ in $\check{\S}(\ZZ^2)$ but $L_1 + L_2 = \gen{(1,0),(0,2)}\neq \ZZ^2$.

As each line in $\ZZ^n$ only contains two elements that span it, a frame in $\F$ is equivalent to the choice of a basis of the free abelian group $\ZZ^n$ modulo signs.
A subset $\sigma = \{M_1, \ldots, M_k\} \subset \S(\ZZ^n)$ lies in $\CB(\S(\ZZ^n),\F)$ if there is basis $B$ of $\ZZ^n$ such that each $M_i$ is spanned by a subset of $B$.
This set $\sigma$ is contained in $\red{\PD(\S(\ZZ^n),\F)}$ if furthermore, the $M_i$ have pairwise trivial intersection.
Finally, it lies in $\red{\D(\S(\ZZ^n),\F)}$ if $\ZZ^n = M_1\oplus  \cdots \oplus M_k$. 

If $b_1, \ldots, b_n$ is a basis of $\ZZ^n$, a maximal chain in $\red{\PD(\S,\F)}$ is given by
\begin{multline*}
   \{\gen{b_1}\} < \cdots < \{\gen{b_1}, \ldots, \gen{b_{n-1}}\}\\
    < \{\gen{b_1}, \ldots, \gen{b_n}\} < \{\gen{b_1, b_2}, \gen{b_3}, \ldots, \gen{b_n}\}< 
    \cdots <\{\gen{b_1, \ldots, b_{n-1}}\}. 
\end{multline*}
The last $n-1$ terms of this chain (in the bottom row) provide a maximal chain in $\red{\D(\S,\F)}$. There are $2^n-2$ non-empty proper subsets of $\{b_1, \ldots, b_n\}$, and the set of spans of all these subsets gives a maximal simplex in $\CB(\S,\F)$.
\end{example}

\subsection{Relation to the definitions in \texorpdfstring{\cite{PW}}{PW}}
\label{rk_def_PW}
In the following, we explain how our definition of frames and decompositions in key situations specializes to the definition given in \cite{PW}.
In particular, we show that elements of partial decompositions have ``additive height".
This recovers the dimension-theorem in the subspace poset of a vector space, where the dimension of a partial (direct sum) decomposition is the sum of the dimensions.

  In the previous work \cite{PW}, there is a notion of frame that we claim is a particular case of a family of frames $\F$ here.
  Let $\S$ be any poset, and suppose the order complex of $\check{\S}$ has dimension $n$.
  We take $\F$ to be the collection of frames $\tau \subseteq \S$, as defined in \ref{def_frame}, such that $|\tau|=n$ and $\Sigma(\tau)$ is isomorphic to the proper part of the Boolean lattice on $\tau$.
  In view of Definition 2.1 of \cite{PW}, we claim that $\red{\D(\S,\F)}$ is the upward closed subposet of $\D(\check{\S}) \setminus \{ \{1_{\S}\} \}$ whose minimal elements are the decompositions of size $n$.
  We have a similar description for $\red{\PD(\S,\F)}$.
  
  To prove this assertion, it only remains to verify the height condition, namely, that $h(\bigvee_{x\in \sigma}x) = \sum_{x\in \sigma} h(x)$ for any partial decomposition $\sigma$.
  Here $h$ is the height function for the poset $\S$ (observe that if $x\in \S$ then $h(x)$ equals the height of $x$ in $\check{\S}$ as defined in \cite{PW}).
  We prove this first for our set of frames and then show that this easily extends to partial decompositions.
  
  Write $\tau = \{x_1,\ldots,x_n\} \in \F$, and $z_i = x_1\vee\ldots \vee x_i \in \check{\S}$, $i\leq n-1$.
  Set $h(1_{\S}) = n$ and $h(0_{\S}) = 0$.
  Then we have that $z_0 = 0_\S$ and $z_i < z_{i+1}$ for all $i\geq 0$.
  Hence $h(z_i) + 1 \leq h(z_{i+1})$, and inductively we have a chain of inequalities
  \[ n-1 \leq h(z_1) + (n-2) \leq \cdots \leq h(z_{i}) + (n-i-1) \leq h(z_{i+1}) + (n-i-2) \leq \cdots \leq h(z_{n-1}) \leq n-1.\]
  We conclude that these are equalities and $h(z_i) = i$ for all $i$.
  Also $x_1\vee\ldots\vee x_n$ exists in $\check{\S}$ and is equal to $1_{\S}$.
  Therefore the elements of $\tau$ are minimal elements of $\S$ and for any $\tau'\subseteq \tau$ we have
  \[ h\big( \bigvee_{x\in\tau'} x \big) = |\tau'|. \]

  On the other hand, let $\sigma \in \red{\D(\S,\F)}$ be such that $P_i(\sigma,\tau)$ holds for all $i=1,2,3$.
  Put $\tau_y = \{ x\in \tau \tq x\leq y\}$ for $y\in \sigma$.
  Then $\{\tau_y\tq y\in \sigma\}$ is indeed a partition of $\tau$, and for all $y\in \sigma$ we have $y = \bigvee_{x\in \tau_y}x$.
  From the computation on the heights above, for $y\in \sigma$ we get
  \[ h(y) = h\big( \bigvee_{x\in \tau_y}x \big) = |\tau_y|.\]
  In particular, if $y_1,\ldots,y_r\in \sigma$ are distinct elements then
  \[ h(y_1\vee\ldots\vee y_r)  = \sum_{i=1}^r |\tau_{y_i}| = \sum_{i=1}^r h(y_i).\]
  This shows that the definitions presented here coincide with that given in Definition 2.1 of \cite{PW}, up to a selection of $n$-frames $\tau$ such that $\Sigma(\tau)$ is the proper part of a Boolean lattice on $n$ elements.
  In particular, from Lemma 2.5 of \cite{PW} we see that if $\sigma\in \red{\D(\S,\F)}_{\geq \tau}$ for some $\tau\in \F$ then $\sigma$ is obtained by taking joins of elements of $\tau$ and hence $\red{\D(\S,\F)}_{> \tau}$ is isomorphic to the proper part of the partition lattice on $\tau$.

\subsection{The extension property (EP)}

To prove our main theorem, we need the following property.

\begin{definition}
  We say that a non-empty collection of frames $\F$ has the extension property (EP) if for all $\sigma, \sigma'\in \red{\PD(\S,\F)}$ with $\sigma\leq\sigma'$, there is $\tau\in \F$ such that both $P_{1}(\sigma,\tau)$ and $P_{1}(\sigma',\tau)$ hold.
\end{definition}

This property roughly encodes the basis extension property that holds in many contexts: if we have a basis of a subspace, then we can extend it to the full space.
For example, this holds for $\ZZ^n$, as we show in the next example.

\begin{example}[\ref{ex_Zn} continued]
\label{ex_EP}
In the context of $\S(\ZZ^n)$, (EP) says that if $\sigma$ and $\sigma'$ are elements in $\red{\PD(\S(\ZZ^n),\F)}$ such that each $M\in \sigma$ is contained in some $N \in \sigma'$, then there is a line decomposition of $\ZZ^n$ such that each $M\in \sigma$ and $N\in \sigma'$ is spanned by a subset of these lines. This is true because of the following: if $N$ is a direct summand of $\ZZ^n$ containing $M_{1}, \ldots, M_{k}$ and the latter span a direct summand $M_1 \oplus \cdots \oplus M_k$ of $\ZZ^n$, then they also span a direct summand of $N$. Hence, if one chooses a basis $B_i$ for each $M_{i}$, then the union $\bigcup_{i=1}^k B_i$ can be extended to a basis of $N$. This implies (EP).
\end{example}

In fact, (EP) is also valid for chains of partial decompositions:

\begin{lemma}
\label{lm:EPproperty}
  Assume that $\F$ satisfies (EP) and that $\sigma_1 <\cdots < \sigma_t$ is a chain of elements from 
  $\red{\PD(\S,\F)}$.
  Then there is $\tau\in \F$ such that $P_1(\sigma_i,\tau)$ holds for all $1\leq i\leq t$. 
\end{lemma}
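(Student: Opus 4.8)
The plan is to induct on the length $t$ of the chain. For $t=1$ this is immediate from the definition of $\red{\PD(\S,\F)}$: a partial decomposition $\sigma_1$ is by definition one for which there exists $\tau\in\F$ with $P_1(\sigma_1,\tau)$ holding (together with $P_2$, which we do not even need here). For $t=2$ the statement is exactly (EP). So assume $t\geq 3$ and that the claim holds for all chains of length $t-1$. Applying the inductive hypothesis to $\sigma_2<\cdots<\sigma_t$ gives a frame $\tau'\in\F$ with $P_1(\sigma_i,\tau')$ for all $2\leq i\leq t$; in particular $\sigma_2$ is ``spanned'' by $\tau'$. We would like to also accommodate $\sigma_1$, but the frame produced for the shorter chain need not relate to $\sigma_1$.

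The key step is therefore to merge this with an application of (EP) to the pair $\sigma_1<\sigma_2$. The natural approach: since $\sigma_1\leq\sigma_2$ and $\sigma_2$ sits inside $\bool(\tau')$, one would like to ``refine'' $\tau'$ below $\sigma_2$ so that $\sigma_1$ becomes visible, without disturbing the elements of $\sigma_3,\ldots,\sigma_t$ that already live in $\bool(\tau')$. Concretely, I expect the argument to go as follows. Each $y\in\sigma_2$ satisfies $y=\bigvee_{x\in\tau'_y}x$ where $\tau'_y=\{x\in\tau': x\leq y\}$, and these $\tau'_y$ are disjoint (by $P_2$, using that distinct frame elements have no common lower bound). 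For each such $y$, the elements of $\sigma_1$ lying below $y$ form a partial decomposition of the ``sub-object'' $y$; one wants to choose a frame $\tau'_y$-refinement of $y$ adapted to them — this is where one re-invokes (EP), applied inside the interval below $y$, or more precisely applies (EP) to the pair $\sigma_1\cap\S_{\leq y}$ and $\tau'_y$ viewed as partial decompositions. Taking the union of these local frames over $y\in\sigma_2$, together with the $\tau'$-part not below any element of $\sigma_2$, should yield a frame $\tau\in\F$ with $P_1(\sigma_1,\tau)$ and still $P_1(\sigma_i,\tau)$ for $i\geq 2$, since every element of $\sigma_i$ ($i\geq 2$) is a join of elements of $\tau'$, each of which is in turn a join of elements of $\tau$.

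The main obstacle is verifying that the merged collection $\tau$ is genuinely a frame in $\F$ and that every $\sigma_i$ is spanned by it. The first part is a closure/associativity bookkeeping argument: one must check the meet condition of \ref{def_frame} for $\tau$, using that $\tau'$ was a frame and that each local refinement was; Eq.~(\ref{eq_associativity_join}) and the observation that $\bool(\tau')$ is closed under existing meets will be the main tools. The second part — that $\sigma_i\subseteq\bool(\tau)$ for $i\geq 2$ — should follow formally from $\sigma_i\subseteq\bool(\tau')$ and $\bool(\tau')\subseteq\bool(\tau)$, the latter because each $x\in\tau'$ equals a join of a subset of $\tau$. A cleaner alternative, which I would try first to avoid the refinement bookkeeping, is a direct induction that at each stage applies (EP) only to consecutive pairs and argues that one can always choose the witnessing frames coherently; but I suspect this still forces the same local-refinement argument, so the ``bottom-up merge'' outlined above is likely the cleanest route.
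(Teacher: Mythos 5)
There is a genuine gap in your inductive step. You start from a frame $\tau'$ adapted to the top segment $\sigma_2<\cdots<\sigma_t$ and then try to build a frame for the whole chain by locally refining $\tau'$ below the elements of $\sigma_2$ and taking the union of the local pieces. In the abstract setting of the lemma this merge is not available: $\F$ is just some collection of frames satisfying (EP), so even if your merged antichain happened to satisfy the meet condition of \ref{def_frame}, nothing forces it to belong to $\F$; and (EP) gives no way to produce it, since it only asserts the existence of \emph{some} common frame in $\F$ for a comparable pair $\sigma\leq\sigma'$ in $\red{\PD(\S,\F)}$, with no control over how that frame relates to $\tau'$ away from the elements being refined. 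The local applications of (EP) you invoke also do not typecheck: there is no induced family of frames on $\S_{\leq y}$, and the pair $\sigma_1\cap\S_{\leq y}$, $\tau'_y$ is in general not comparable in the refinement order, so (EP) cannot be applied to it. What your outline reproduces is the way (EP) is \emph{verified} in the concrete examples (cf.\ \ref{prop:EPringsWithFC} and the orthogonal-decomposition case), where genuine sub-objects and unions of local bases exist; the whole point of the lemma is to get by without that structure.

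The paper's proof avoids the merge entirely by running the induction on the bottom segment $\sigma_1<\cdots<\sigma_{t-1}$. Given $\tau'$ with $P_1(\sigma_i,\tau')$ for all $i\leq t-1$, set $\nu=\{y\in\tau'\tq \{y\}\leq\sigma_{t-1}\}$; then $\nu\in\red{\PD(\S,\F)}$ and $\nu\leq\sigma_{t-1}\leq\sigma_t$, so a \emph{single} application of (EP) to the pair $\nu\leq\sigma_t$ yields $\tau\in\F$ with $P_1(\nu,\tau)$ and $P_1(\sigma_t,\tau)$. Since every $x\in\sigma_i$ with $i\leq t-1$ is a join of elements of $\tau'$, all of which lie below $x$ and hence below some element of $\sigma_{t-1}$, such $x$ is a join of elements of $\nu$, and $P_1(\nu,\tau)$ then propagates to all $\sigma_i$ by associativity of joins. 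Note that the direction of the induction matters: handling the top segment first, as you do, leaves you needing a frame finer than $\sigma_1$ yet compatible with $\tau'$, which is exactly the merging problem you could not resolve; handling the bottom segment first makes the new, coarsest element $\sigma_t$ the one absorbed by the single (EP) application.
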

\begin{proof}
  We argue by induction on $t$.
  The claim is obvious for $t=1,2$.
  Suppose now that there is $\tau'\in \F$ such that $P_1(\sigma_i,\tau')$ holds for all $1\leq i \leq t-1$, that is,~every $x\in \bigcup_{i=1}^{t-1} \sigma_i$ is a join of elements from $\tau'$.
  Let $\nu \coloneqq \{y\in \tau'\tq \{y\}\leq \sigma_{t-1} \}$.
  Clearly, $\nu\in \red{\PD(\S,\F)}$ and $\nu\leq \sigma_{t-1}\leq \sigma_t$. Hence by (EP), there is $\tau\in \F$ such that both $P_1(\nu, \tau)$ and $P_1(\sigma_t,\tau)$ hold. But every $x\in \bigcup_{i=1}^{t-1} \sigma_i$ is a join of elements from $\nu$, so $P_1(\nu, \tau)$ implies $P_1(\sigma_{i}, \tau)$ for all $1\leq i\leq t-1$. 
\end{proof}

Using (EP), we can give bounds on the dimensions of $\red{\D(\S,\F)}$ and $\red{\PD(\S,\F)}$:

\begin{lemma}
\label{lem_dimensions}
Let $\S$ be a poset and $\F$ a non-empty set of frames in $\S$ that satisfies (EP). Assume that $m \coloneqq \sup \big\{\,|\tau|\, \tq \,\tau\in \F\,\big\}$ is finite. Then:
    \begin{enumerate}
        \item $0\leq \dim(\red{\D(\S,\F)})\leq m-1$.
        \item $m-1\leq \dim(\red{\PD(\S,\F)})\leq 2m-2$.
        \item If for all $\tau\in \F$, the poset $\bool(\tau)$ is the poset of non-empty faces of the boundary of an $(m-1)$-simplex, then $\dim(\red{\PD(\S,\F)}) = 2m-3$ and $\dim(\red{\D(\S,\F)}) = m-2$.
    \end{enumerate}
\end{lemma}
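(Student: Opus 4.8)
The plan is to turn a chain in $\red{\PD(\S,\F)}$ or $\red{\D(\S,\F)}$ into combinatorial data attached to one frame, and then bound chain lengths by a monotone numerical invariant; the lower bounds will come from explicit chains. \emph{Reduction to one frame.} Given a chain $\sigma_1<\dots<\sigma_t$ in $\red{\PD(\S,\F)}$ (a chain in $\red{\D(\S,\F)}$ is in particular such a chain), \ref{lm:EPproperty} produces a single $\tau\in\F$ with $P_1(\sigma_i,\tau)$ for all $i$; fix such a $\tau$. The first step is to upgrade this to $P_2(\sigma_i,\tau)$ by the argument used in the Note after \ref{def_frame}: if $x\in\tau$ were below two distinct $y_1,y_2\in\sigma_i\subseteq\bool(\tau)$, then $y_1\wedge y_2$ exists and lies in $\bool(\tau)$ by \eqref{eq_def_frame}, hence is a join of elements of $\tau$, each of which is then below both $y_1$ and $y_2$, contradicting $P_2$ for a frame certifying $\sigma_i\in\red{\PD(\S,\F)}$. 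Since an element of $\bool(\tau)$ is the least upper bound of its down-set in $\tau$, each $y\in\sigma_i$ equals $\bigvee\tau_y$ with $\tau_y:=\{x\in\tau\tq x\leq y\}$ non-empty, and by $P_2$ the family $\pi_i:=\{\tau_y\tq y\in\sigma_i\}$ consists of pairwise disjoint non-empty subsets of $\tau$. Thus $\sigma_i$ is faithfully encoded by the partial partition $\pi_i$ of $T_i:=\bigcup_{y\in\sigma_i}\tau_y\subseteq\tau$, the relation $\sigma_i\leq\sigma_{i+1}$ becomes ``$T_i\subseteq T_{i+1}$ and every block of $\pi_i$ lies inside a block of $\pi_{i+1}$'', and one must further check that $\sigma_i\in\red{\D(\S,\F)}$ forces $T_i=\tau$, so that decompositions correspond exactly to honest partitions of $\tau$.

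\emph{Lower bounds.} As $m$ is a supremum of positive integers it is attained, so there is $\tau=\{x_1,\dots,x_m\}\in\F$; by the elementary properties recorded earlier every subset of $\tau$ lies in $\red{\PD(\S,\F)}$ and $\tau\in\red{\D(\S,\F)}$, so $\{x_1\}<\{x_1,x_2\}<\dots<\tau$ is a chain of $m$ elements, giving $\dim(\red{\PD(\S,\F)})\geq m-1$ and $\dim(\red{\D(\S,\F)})\geq 0$. Under the hypothesis of~(3), $\bool(\tau)$ is the poset of proper non-empty faces of the simplex on $\tau$, so the above chain can be prolonged by successively merging blocks, $\tau<\{x_1\vee x_2,x_3,\dots,x_m\}<\dots<\{x_1\vee\dots\vee x_{m-1},x_m\}$, which is legitimate precisely because at each stage every block stays a proper subset of $\tau$, so all the joins lie in $\bool(\tau)$; this yields a chain of $2m-2$ elements, whence $\dim(\red{\PD(\S,\F)})\geq 2m-3$, and its last $m-1$ terms lie in $\red{\D(\S,\F)}$, whence $\dim(\red{\D(\S,\F)})\geq m-2$.

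\emph{Upper bounds.} By the reduction step, a chain of length $t$ in $\red{\D(\S,\F)}$ becomes a strictly order-preserving chain of $t$ partitions of the $m$-element set $\tau$ under coarsening; since the partition lattice $\Pi_m$ has height $m-1$, this gives $t\leq m$, so $\dim(\red{\D(\S,\F)})\leq m-1$, and under~(3) the one-block partition $\{\tau\}$ is excluded (because $\bigvee\tau\notin\bool(\tau)$), so $t\leq m-1$ and $\dim(\red{\D(\S,\F)})\leq m-2$. For $\red{\PD(\S,\F)}$ I would use the invariant $\theta(\sigma):=2|T(\sigma)|-|\sigma|$ with $T(\sigma)=\bigcup_{y\in\sigma}\tau_y$: describing a step $\sigma_k<\sigma_{k+1}$ as adding $j\geq 0$ new frame elements, merging old blocks $i\geq 0$ times, and creating $\ell\leq j$ genuinely new blocks, one computes $\theta(\sigma_{k+1})-\theta(\sigma_k)=2j+i-\ell\geq j+i\geq 0$, with equality forcing $i=j=\ell=0$ and hence, by faithfulness of the encoding, $\sigma_k=\sigma_{k+1}$. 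So $\theta$ is strictly increasing along the chain; since $1\leq|\sigma|\leq|T(\sigma)|\leq m$ we get $1\leq\theta(\sigma)\leq 2m-1$, hence $t\leq 2m-1$ and $\dim(\red{\PD(\S,\F)})\leq 2m-2$, proving~(1) and~(2). Under~(3) a block can never equal all of $\tau$, so $|T(\sigma)|=m$ forces $|\sigma|\geq 2$ and thus $\theta(\sigma)\leq 2m-2$ in all cases, improving the bound to $\dim(\red{\PD(\S,\F)})\leq 2m-3$; together with the lower bounds this gives~(3).

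\emph{Main obstacle.} The technical heart is the reduction step. One must show that $\sigma\mapsto(\,T(\sigma),\pi(\sigma)\,)$ is a faithful, order-preserving encoding — equivalently that every proper refinement step uses a new frame element, merges two blocks, or creates a new block, so that $\theta$ is genuinely strictly monotone — and that a decomposition, once re-expressed through the common frame $\tau$, exhausts all of $\tau$. Both facts rest on the observation that an element of $\bool(\tau)$ is determined by its down-set in $\tau$, combined with the frame axiom \eqref{eq_def_frame}; the delicate point is that a single (partial) decomposition may be certified by several different frames, so one has to argue that passing to the common frame supplied by \ref{lm:EPproperty} preserves the $P_2$ (and, for decompositions, $P_3$) information.
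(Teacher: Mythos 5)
Your strategy is the same as the paper's: use \ref{lm:EPproperty} to put a whole chain over one frame $\tau$, read each $\sigma_i$ as a partial partition of $\tau$, bound chain lengths by counting how a step can modify this data, and exhibit explicit chains for the lower bounds; your potential $\theta(\sigma)=2|T(\sigma)|-|\sigma|$ is just a cleaner bookkeeping of the paper's ``at most $|\tau|-1$ steps of each of the two kinds''. One local slip: your upgrade from $P_1(\sigma_i,\tau)$ to $P_2(\sigma_i,\tau)$ ends by producing elements of $\tau$ below both $y_1$ and $y_2$ and calling this a contradiction with ``$P_2$ for a frame certifying $\sigma_i$'' --- but the certifying frame $\tau_i$ only constrains elements of $\tau_i$, so this is circular as written. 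The fix is easy and uses exactly the tools you cite: first show that two distinct members of a partial decomposition never admit a common lower bound in $\S$ (if $z\le y_1,y_2$, apply Eq.~(\ref{eq_def_frame}) for $\tau_i$ to get $y_1\wedge y_2\in\bool(\tau_i)$, hence some element of $\tau_i$ lies below both $y_1$ and $y_2$, contradicting $P_2(\sigma_i,\tau_i)$); then no $x\in\tau$ can lie below two members of $\sigma_i$.

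The genuine gap is the point you flag but never close: that $\sigma_i\in\red{\D(\S,\F)}$ forces $T_i=\tau$, i.e.\ that a decomposition, rewritten over the common frame, exhausts it (equivalently, that ``add a frame element'' steps cannot occur inside $\red{\D(\S,\F)}$). Item (1) and the $\red{\D(\S,\F)}$ half of item (3) rest entirely on this: without it a chain of decompositions only lives among partial partitions and your $\theta$ gives nothing better than $2m-2$. This transfer of $P_3$ from the certifying frame to the common frame is not a formality, because a single decomposition can be certified by several frames of different sizes: take $\S=\{x_1,x_2,z\}$ with $x_1<z$, $x_2<z$ and $\F=\{\{x_1\},\{x_1,x_2\}\}$; both are frames, (EP) holds since $\bool(\{x_1,x_2\})=\S$, $m=2$, and yet $\{x_1\}<\{x_1,x_2\}<\{z\}$ is a chain of decompositions, so the exhaustion claim (and, taken literally, the bound in (1)) fails when frames of unequal cardinality are allowed. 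So a complete proof must actually establish the exhaustion statement in the intended setting --- e.g.\ under the hypothesis of (3), where all frames have exactly $m$ elements and $\bool(\tau)$, $\bool(\tau_i)$ have no top element, or under an explicit equal-cardinality assumption as in all of the paper's examples --- rather than list it as an obstacle. For what it is worth, the paper's own proof is equally terse at this exact spot (it asserts without argument that the step of type Eq.~(\ref{eq_step_up_extend_partial_frame}) cannot occur in $\red{\D(\S,\F)}$, and dispatches (3) with ``a similar proof''), so your write-up matches the paper's level of detail there; but as a self-contained proof of items (1) and (3) it is not yet complete.
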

\begin{proof}
Let $\sigma_0<\cdots < \sigma_k$ be a chain in $\red{\PD(\S,\F)}$. Then by \ref{lm:EPproperty}, there is a frame $\tau\in \F$ such that for all $0\leq i \leq k$, every $x\in \sigma_i$ is a join of elements from $\tau$.
If the chain is maximal, then for all $i$, we either have
\begin{equation}
\label{eq_step_up_join}
    \sigma_{i+1} = \sigma_i\setminus\sigma'\cup \{ \bigvee_{y\in \sigma'} y\}
\end{equation}
for some $\sigma'\subseteq \sigma_i$ whose elements have a join, or
\begin{equation}
\label{eq_step_up_extend_partial_frame}
    \sigma_{i+1} = \sigma_i\cup \{ x\},
\end{equation}
where $x\in \tau$ such that $x\not \leq y$ for all $y\in \sigma_i$. 

There can be at most $(|\tau|-1)$-many occurrences of both Eq. (\ref{eq_step_up_join}) and Eq. (\ref{eq_step_up_extend_partial_frame}), where Eq. (\ref{eq_step_up_extend_partial_frame}) cannot occur in $\red{\D(\S,\F)}$.

A similar proof yields the consequences stated in item (3).
\end{proof}

\subsection{Group actions and statement of the main theorem}
If a group $G$ acts on the poset $\S$ as a group of poset automorphisms then we call $\S$ a $G$-poset. We write $x^g$ for the image of $x \in \S$ under the action of $g \in G$. For a subset $\tau$ of $\S$ and $g \in G$ we write 
$\tau^g$ for $\{~x^g~|~x \in \tau\,\}$. By elementary arguments it follows that 
$\Sigma(\tau)^g = \Sigma(\tau^g)$ and hence $\Sigma(\tau)$ and $\Sigma(\tau^g)$
are isomorphic. Since also the antichain property is preserved under the action of $G$, it follows that the set of all frames of a $G$-poset $\S$ is invariant under
$G$. If $\F$ is some set of frames invariant under $G$ then again elementary arguments show that $G$ acts on $\CB(\S,\F)$ and $\red{\PD(\S,\F)}$ as a group of poset automorphisms and hence both become $G$-posets.
Using this $G$-action, our main theorem is as follows.

\begin{theorem}
\label{thm:main}
  Let $G$ be a group, $\S$ a $G$-poset and let $\F$ be a non-empty collection of frames that satisfies (EP) and is invariant under the action of $G$.
  Then $\CB(\S,\F)$ is $G$-homotopy equivalent to $\red{\PD(\S,\F)}$.
\end{theorem}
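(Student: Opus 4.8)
The plan is to exhibit both $\CB(\S,\F)$ and $\red{\PD(\S,\F)}$ as nerves of two $G$-invariant covers that turn out to have literally the same nerve $\mathcal N$, and then invoke the equivariant nerve lemma. Throughout I work with $\CB(\S,\F)$ as the simplicial complex whose faces are the basis-compatible sets; as a $G$-space this agrees with the order complex of the face poset used in the statement. This complex is the union $\bigcup_{\tau\in\F}C_\tau$, where $C_\tau$ is the full simplex on the vertex set $\bool(\tau)$ (all subsets of $\bool(\tau)$), since a set is a face exactly when $P_1(\sigma,\tau)$ holds for some $\tau\in\F$. On the other side, for $\tau\in\F$ put $P_\tau\coloneqq\{\sigma\in\red{\PD(\S,\F)}\tq P_1(\sigma,\tau)\}$; this subposet depends only on $\bool(\tau)$. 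The first thing I would establish is the description
\[
  P_\tau = \big\{\,\emptyset\ne\sigma\subseteq\bool(\tau)\ \tq\ \text{no two elements of }\sigma\text{ have a lower bound in }\S\,\big\}.
\]
Indeed, a common lower bound of two distinct elements of $\sigma$ would violate $P_2(\sigma,\tau)$; conversely, if some $x\in\tau$ lies below two elements $y,y'$ of $\sigma$, then by \ref{def_frame} the meet $y\wedge y'$ exists and lies in $\bool(\tau)$, hence equals a join of a non-empty subset $C\subseteq\tau$, and any $x'\in C$ lies below both $y$ and $y'$. Thus $P_2(\sigma,\tau)$ holds as soon as the elements of $\sigma$ pairwise have no lower bound; since this last condition does not mention $\tau$, it follows that $P_2(\sigma,\tau)$ is automatic whenever $\sigma\in\red{\PD(\S,\F)}$ and $\sigma\subseteq\bool(\tau)$, so $P_\tau$ is also the set of partial decompositions with respect to the frame $\tau$.

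Next I would check the hypotheses of the nerve lemma for the two covers. By construction $\{C_\tau\}_{\tau\in\F}$ covers $\CB(\S,\F)$, and $\{\Delta(P_\tau)\}_{\tau\in\F}$ covers $\Delta(\red{\PD(\S,\F)})$ — the latter because any chain in $\red{\PD(\S,\F)}$ lies in a common $P_\tau$ by \ref{lm:EPproperty}; this is the single point where (EP) enters. For a finite family $\tau_1,\dots,\tau_k\in\F$ set $L\coloneqq\bigcap_i\bool(\tau_i)$, which is again closed under existing meets in $\S$. Then $\bigcap_i C_{\tau_i}$ is the full simplex on $L$, and $\bigcap_i\Delta(P_{\tau_i}) = \Delta\big(\bigcap_i P_{\tau_i}\big)$ with $\bigcap_i P_{\tau_i} = \{\emptyset\ne\sigma\subseteq L\tq\text{no two elements of }\sigma\text{ have a lower bound}\}$; both of these intersections are non-empty if and only if $L\ne\emptyset$ (for the second, a singleton $\{y\}$ with $y\in L$ always works). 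Hence the two covers have the same nerve $\mathcal N$: the simplicial complex with vertex set $\{\bool(\tau)\tq\tau\in\F\}$, on which $G$ acts through $\bool(\tau)^g=\bool(\tau^g)$, and with $\{\bool(\tau_1),\dots,\bool(\tau_k)\}$ a face precisely when $L\ne\emptyset$.

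The substantive step is the contractibility of the non-empty intersections. The simplices $\bigcap_i C_{\tau_i}$ contract equivariantly onto their barycenters. For the other family, assume $L\ne\emptyset$ (so $L$, being contained in the finite poset $\bool(\tau_1)$, is finite) and consider $\bigcap_i P_{\tau_i}$; I claim it is conically contractible with cone point $\nu\coloneqq\min(L)$, the set of minimal elements of $L$. First, $\nu\in\bigcap_i P_{\tau_i}$: if distinct $z,z'\in\nu$ had a lower bound, then (using \ref{def_frame} for each $\tau_i$ as above) the meet $z\wedge z'$ would lie in $L$ and be strictly below $z$, contradicting minimality — so no two elements of $\nu$ have a lower bound. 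Now define $c(\sigma)\coloneqq\sigma\cup\{z\in\nu\tq z\not\le y\text{ for all }y\in\sigma\}$. The same meet argument shows that an element added to $\sigma$ shares no lower bound with any element of $\sigma$ or with another added element, so $c(\sigma)\in\bigcap_i P_{\tau_i}$; moreover $c$ is order-preserving (every element of $\nu$ lies below some element of $c(\sigma')$, so the elements added to $\sigma$ do too), idempotent, and satisfies $\sigma\le c(\sigma)$ and $\nu\le c(\sigma)$ for all $\sigma$. This forces $\bigcap_i P_{\tau_i}$ to be contractible; and since $c$ and $\nu$ are defined purely in terms of $L$, the contraction is equivariant for the stabiliser of the family $\{\bool(\tau_i)\}$, with $\nu$ a fixed point.

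Applying the $G$-equivariant nerve lemma to the two covers then gives $\CB(\S,\F)\simeq_G\mathcal N\simeq_G\red{\PD(\S,\F)}$, which is the theorem. The main obstacle is the contractibility of the intersections $\bigcap_i P_{\tau_i}$: this is exactly where the frame axiom \ref{def_frame} (closure of $\bool(\tau)$ under existing meets) is indispensable, both to make the closure operator $c$ land back in $\bigcap_i P_{\tau_i}$ and to identify $\nu$ as an honest partial decomposition. A lesser, bookkeeping-type issue is the equivariant upgrade of the nerve lemma; but since every relevant intersection retracts canonically — hence equivariantly — onto a single point (a barycenter, respectively the fixed cone point $\nu$), the equivariant version applies with no extra work.
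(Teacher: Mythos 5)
Your proposal is correct, but it takes a genuinely different route from the paper. The paper builds explicit order-preserving $G$-maps $m\colon \Delta\CB(\S,\F)\to\red{\PD(\S,\F)}$ (a chain goes to the maximal elements among the sets $\min(\Cl(\sigma))$) and $u\colon\Delta\big(\red{\PD(\S,\F)}\big)\to\CB(\S,\F)$ (union of a chain, well defined exactly because of (EP) via \ref{lm:EPproperty}), and then shows that the composites $u\circ\Delta m$ and $m\circ\Delta u$ are comparable to known $G$-homotopy equivalences assembled from the closure operator $\Cl$ and the $\max$/$\min$ maps of \ref{prop:homotEquivPosetOrdComplex}; this produces concrete $G$-homotopy inverses. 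You instead cover both spaces by the ``apartment-wise'' pieces indexed by $\{\bool(\tau)\tq\tau\in\F\}$: the full simplices on $\bool(\tau)$ tautologically cover $\CB(\S,\F)$, while the subposets $P_\tau$ cover $\Delta\big(\red{\PD(\S,\F)}\big)$ precisely because of (EP), again through \ref{lm:EPproperty}. Your identification of $P_\tau$ with the non-empty subsets of $\bool(\tau)$ whose elements pairwise have no lower bound is right (the exposition of the two directions is a bit garbled, but the essential point — the meet clause of \ref{def_frame} pushes any common lower bound down to an element of the frame, contradicting $P_2$ — is there), and the same clause correctly powers the conical contraction of $\bigcap_i P_{\tau_i}$ onto $\nu=\min(L)$ for $L=\bigcap_i\bool(\tau_i)$, which is non-empty because $\bool(\tau_1)$ is finite. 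What your route buys is brevity and a conceptual picture: both complexes are glued from the same frame data, the two covers have literally the same nerve, and (EP) enters in exactly one place. What it costs is explicitness — the paper's $u$ and $m$ are usable maps (and the equivalence ``$u$ well defined $\iff$ (EP)'' feeds into \ref{prop:EPringsWithFC}) — and it leans on an equivariant nerve theorem that you should cite precisely: the hypothesis is that each non-empty intersection be $G_J$-contractible for the simplex stabilizer $G_J$, not merely contractible. Your verification that the cone points (barycenters, respectively $\nu=\min(L)$) are $G_J$-fixed and that the contractions are $G_J$-equivariant supplies exactly this hypothesis, so the argument goes through, but ``applies with no extra work'' should be replaced by a reference to such an equivariant nerve/homotopy-colimit comparison lemma.
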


\section{Examples}
\label{sec:examples}

Before we prove \ref{thm:main}, we analyze the scenarios from \ref{thm_hom_eq_cb_pd} and \ref{cor:remainingcases}. In particular, we show that in each case the condition (EP) from \ref{thm:main} is satisfied and hence \ref{thm_hom_eq_cb_pd} and \ref{cor:remainingcases} follow. 

If $X$ is an object such that we can naturally associate a poset $\S$ with a family of frames $\F$, we will usually write $\red{\D(X)}$, $\red{\PD(X)}$ and $\CB(X)$ instead of $\red{\D(\S,\F)}$, $\red{\PD(\S,\F)}$ and $\CB(\S,\F)$ respectively.

\subsection{Free modules over a ring \texorpdfstring{$R$}{R}}
\label{sec_ex_free_modules}
\medskip

This is a generalization of $\S(\ZZ^n)$ from \ref{ex_Zn} and \ref{ex_EP} in the previous section, and it is motivated by Rognes' original definitions.
Take $M=R^n$, a free module of rank $n$ over a commutative ring $R$ with unit, and let $\S(M)$ be the poset of non-zero proper summands of $M$.
If $N,N'\in \S(M)$ are such that $N+N'\in \S(M)$, then this sum agrees with the join $N\vee N' = N+N'$. Also if $N\cap N' \in \S(M)$, then this agrees with the meet $N \wedge N' = N\cap N'$. However, in general, neither $N+N'$ nor $N\cap N'$ need to be summands of $M$; for the case of $N+N'$, we saw this in \ref{ex_Zn}.

We apply our definitions to
$\S = \S(M)$. 
As our collection $\F = \F(M)$ of frames we take the collections 
$\tau = \{ N_1,\ldots,N_n \} \subseteq \S(M)$ of rank one free elements of $\S(M)$ such 
that $N_1\oplus \cdots \oplus N_n = M$.
In this case, for each $\tau\in \F$, the poset $\Sigma(\tau)$ 
is isomorphic to the poset of proper faces of an $(n-1)$-simplex. 
In other words, $\Delta\Sigma(\tau)$ is a Coxeter complex of type $\mathtt{A}_{n-1}$.
We write $\red{\PD(M)} = \red{\PD(\S,\F)}$ and $\CB(M) = \CB(\S,\F)$. These complexes agree with the ones defined in the introduction.
The poset $\S$ carries a natural action by $G = \GL_n(R)$ that preserves $\F$, so $\CB(M)$ and $\red{\PD(M)}$ are $G$-posets.

\subsection*{Fields}

If $R = \KK$ is a field and $V = \KK^n$ is a finite-dimensional vector space over $\KK$, then $\S(V)$ is the poset of subspaces different from $\{0\}$ and $V$ and it is the proper part of a lattice. Its order complex is the Tits building of type $\mathtt{A}_{n-1}$ over $\KK$ \cite{AB:Buildings}.
    Moreover, $\red{\PD(V)}$ is the poset of non-empty proper partial direct sum decompositions of $V$.
    (EP) holds for this family by the Basis Extension Theorem.
    
    For each frame $\tau$, the complex $\Delta \Sigma(\tau)$ gives an embedded apartment in the building $\Delta\S$. While $\tau$ ranges over $\F$, this recovers the complete system of apartments of $\Delta\S$. One can think of $\CB(V)$ as being constructed from $\Delta\S$ by completing each such apartment to a simplex of dimension $2^n-3$ spanned by the $2^n-2$ vertices of the apartment.
    
If $R = \GF{q}$ is a finite field, Theorem 1.2 from \cite{HHS} describes the $\GL_n(q)$-module structure of
$\widetilde{H}_{2n-3}(\red{\PD(\GF{q}^n)};\CC)$ 
in the following way.
Let $N$ be the normalizer of a cyclic Coxeter torus $T$ in $\GL_n(q)$ of order $q^n-1$. It
is well-known that $N$ is a semidirect product of an element $f$ of
order $n$ and the Coxeter torus $T$. 
Let $\theta_n : N \rightarrow \CC^ \ast$ be the representation that 
sends $f$ to $e^ {\frac{2\,\pi\, i}{n}}$ and $T$ to $1$.
Then:
\begin{equation}
    \label{eq:rep}
\widetilde{H}_{2n-3}(\CB(\GF{q}^ n);\CC) \overset{\text{\ref{cor:field}}}{\cong} \widetilde{H}_{2n-3}(\red{\PD(\GF{q}^ n)};\CC) \overset{\text{Theorem 1.2 \cite{HHS}}}{\cong} \theta_n\uparrow_N^ {\GL_n(q)}.
\end{equation}

\subsection*{Dedekind domains} More generally, if $R$ is a Dedekind domain and $M = R^n$, it is known that $\S(M)$ is isomorphic to $\S(M\otimes \KK_R)$, the poset of proper non-trivial subspaces of $M\otimes \KK_R \cong \KK_R^n$, where $\KK_R$ is the field of fractions of $R$ (see \cite[p.~3]{Charney}).
    The isomorphism is given by $V\in \S(M\otimes \KK_R)\mapsto V\cap M$. Since this isomorphism preserves intersections, the intersection of summands of $M$ is also a summand of $M$.
    
    As in the case where $R$ is a field, every frame $\tau\in \F(R^n)$ yields an embedded apartment in the $\mathtt{A}_{n-1}$ building $\S(\KK_R^n)$ of the field of fractions. However, while $\tau$ ranges over $\F(R^n)$, this does not give the complete system of apartments, but just the apartments that are \emph{integral} in the sense of \cite{CFP}.
    So $\CB(M)$ can be thought of as being constructed from $\Delta\S(\KK_R^n)$ by completing each integral apartment to a simplex of dimension $2^n-3$.

\medskip

To apply \ref{thm:main}, we need to verify whether the collection $\F$ satisfies (EP).
We will use the following properties for an arbitrary $R$-module $M$ over a commutative ring $R$:
\begin{enumerate}[label=(\roman*)]
    \item If $P\leq N\leq M$ are summands of $M$, then $P$ is a summand of $N$.
    \item If $N\leq M$ are free and $N$ is a summand of $M$ with a free complement, then any basis of $N$ extends to a basis of $M$.
    \item If $M$ is finitely generated and free, and $N\in \sigma\in \red{\PD(M)}$, then $N$ has a free complement in $M$.
\end{enumerate}

Recall that an $R$-module $P$ is stably free if there exist finitely generated free modules $N,M$ such that $P\oplus N = M$.
The following proposition relates stably free modules with (EP) in the case of commutative rings.

\begin{proposition}
\label{prop:EPringsWithFC}
The following are equivalent for a commutative ring $R$ (with unit):
\begin{enumerate}
    \item Every stably free module is free.
    \item If $N,M$ are finitely generated free modules and $N$ is a summand of $M$ then $M/N$ is free.
    \item (EP) holds for $\F(M)$ for any finitely generated free module $M$.
\end{enumerate}
\end{proposition}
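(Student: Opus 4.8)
The plan is to prove the equivalences $(1)\Leftrightarrow(2)\Leftrightarrow(3)$, using throughout the auxiliary properties (i)--(iii). Of these, (i) is the modular law in the lattice of submodules of $M$, (ii) says that a basis of a summand extends to a basis of $M$ after adjoining a basis of a free complement, and (iii) is immediate from the definitions: if $N\in\sigma\in\red{\PD(M)}$ then $N=\bigoplus_{x\in\tau_0}x$ for a subset $\tau_0$ of some frame $\tau\in\F(M)$, so $\bigoplus_{x\in\tau\setminus\tau_0}x$ is a free complement of $N$ in $M$; in particular all three hold over an arbitrary ring. Granting this, $(1)\Leftrightarrow(2)$ is formal: a complement $C$ of a summand $N$ of a finitely generated free module $M$ satisfies $C\oplus N\cong M$ and $M/N\cong C$, so $C$ is stably free and is free exactly when $M/N$ is; conversely every stably free module $P$ arises as such a complement.

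For $(2)\Rightarrow(3)$, I would take $\sigma\le\sigma'$ in $\red{\PD(M)}$ and build a common frame ``piece by piece'' over the elements of $\sigma'$. By $P_2$ for a witnessing frame of $\sigma'$, distinct elements of $\sigma'$ intersect trivially, so each $N\in\sigma$ lies below a unique $N'\in\sigma'$. Fixing $N'$, the elements of $\sigma$ lying below $N'$ are pairwise orthogonal free summands of $M$ whose direct sum $P_{N'}$ is again a summand of $M$ contained in $N'$, hence a summand of $N'$ by (i); since $N'$ is finitely generated free, (2) makes the complement of $P_{N'}$ in $N'$ free, so by (ii) the union of chosen bases of those $N$ extends to a basis of $N'$. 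The sum $P=\bigoplus_{N'\in\sigma'}N'$ is likewise a free summand of $M$, and, invoking (2) and (ii) once more, the union of the bases just obtained extends to a basis $B$ of $M$. The frame $\tau=\{\gen{b}\tq b\in B\}\in\F(M)$ then satisfies $P_1(\sigma,\tau)$ and $P_1(\sigma',\tau)$, because every element of $\sigma\cup\sigma'$ is a direct sum, hence a join, of rank-one summands spanned by a subset of $B$.

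The implication $(3)\Rightarrow(2)$ is the step I expect to be the real obstacle, since the obvious approach is circular: to see that $M/N$ is free it would suffice to fit $N$ into a frame of $M$, i.e.\ to show $\{N\}\in\red{\PD(M)}$ -- but by (iii) the latter already implies $M/N$ free, so this reformulation is no help. The way around this is to enlarge the ambient module. Write $M=N\oplus C$, set $k=\rk(N)$, dispose of the trivial cases $N=0$ and $N=M$ (where $M/N$ is $M$ or $0$), and pass to the finitely generated free module $E=M\oplus R^{k}$. Inside $E$ one has $E/N\cong C\oplus R^{k}\cong M$ and $E/M\cong R^{k}$, both free, so by (ii) both $\{N\}$ and $\{M\}$ lie in $\red{\PD(E)}$ (the condition $P_2$ being vacuous for a singleton), and $\{N\}\le\{M\}$ there. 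Applying (EP) for $\F(E)$ now yields a frame $\tau\in\F(E)$ with $N,M\in\bool(\tau)$, say $N=\bigoplus_{x\in\tau_N}x$ and $M=\bigoplus_{x\in\tau_M}x$; since $N\subseteq M$ and $E=\bigoplus_{x\in\tau}x$ is a direct sum, we get $\tau_N\subseteq\tau_M$, whence $C\cong M/N\cong\bigoplus_{x\in\tau_M\setminus\tau_N}x$ is free. This closes the cycle of implications.
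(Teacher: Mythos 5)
Your proof is correct and follows essentially the same route as the paper: the formal equivalence of (1) and (2), the construction of a common frame for $\sigma\leq\sigma'$ by refining over the elements of $\sigma'$ using (i)--(iii), and for the converse the stabilization $E=M\oplus R^{k}$ with the chain $\{N\}\leq\{M\}$, which is the same trick as the paper's $Q=\tilde N\oplus M$ (you prove $(3)\Rightarrow(2)$ where the paper proves $(3)\Rightarrow(1)$, but the argument is identical). The only differences are organizational, plus your welcome explicit verifications of (iii), of $\tau_N\subseteq\tau_M$, and of the pairwise trivial intersections in $\sigma'$, which the paper leaves implicit.
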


\begin{proof}
It is straightforward to show that (1) implies (2).

We prove that (2) implies (3).
Let $M = R^n$ and take two partial decompositions $\sigma\leq \sigma'$ in $\red{\PD(M)}$.
We show that there exists a frame $\tau\in \F(M)$ such that each element of $\sigma\cup \sigma'$ is obtained as the sum of some elements of $\tau$.

Denote by $\Phi(\sigma')$ the span of the submodules of $\sigma'$.
By (2), $\Phi(\sigma')$ has a free complement since it is a free summand of $M$.
Therefore, by properties (ii) and (iii) above, it is enough to establish the result for the case $\Phi(\sigma') = M$.
Write $\sigma' = \{N_1,\ldots,N_r\}$ and let $\sigma_i = \{S\in \sigma\tq S\leq N_i\}$.
Then $T_i:=\Phi(\sigma_i)$ is a free summand of $N_i$ by (i) above.
By (2), $T_i$ has a free complement $P_i$ in $N_i$.
Thus we can take frames $\tau_i\in \F(T_i)$ and $\tau'_i\in \F(P_i)$, and it is not hard to show that $\tau = \bigcup_i \tau_i\cup \tau'_i$ is a frame in $\F(M)$.
This shows that (2) implies (3).

Finally, we show that (3) implies (1).
Assume that $P\oplus N = M$ where $N,M$ are finitely generated free modules.
We show that $P$ is free.
Let $Q = \tilde{N}\oplus M = \tilde{N}\oplus (N\oplus P)$, where $\tilde{N}\cong N$. This is a finitely generated free module. Both $N$ and $M$ are free summands of $Q$ with free complements $\tilde{N}\oplus P \cong M$ and $\tilde{N}$, respectively.
Hence $\{N\}$,$\{M\}\in \red{\PD(Q)}$ and $\{N\}\leq \{M\}$.
By (EP), there exists a basis $B$ of $Q$ such that $N$, $M$ are spanned by members of $B$.
Then $P = M/N$ is free with basis $B\cap M \setminus N$.
\end{proof}

According to Lam \cite{Lam}, a ring satisfying any of the equivalent properties of \ref{prop:EPringsWithFC} is called a Hermite ring.
The following proposition shows that various rings $R$ are Hermite rings.

\begin{corollary}
\label{prop:EP}
If $R$ is a local ring, a semilocal ring, a polynomial ring over a PID with a finite number of indeterminates, or a Dedekind domain, then $R$ is a Hermite ring, that is,~(EP) holds for $\F(R^n)$.
\end{corollary}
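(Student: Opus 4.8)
The plan is to reduce the whole statement to \ref{prop:EPringsWithFC}: that proposition shows that (EP) for $\F(R^n)$, for all finitely generated free $R$-modules, is equivalent to the assertion that every stably free $R$-module is free, so it suffices to verify this assertion in each of the four listed cases. Before splitting into cases, I would record the elementary observation that a stably free module $P$, say with $P\oplus R^a\cong R^b$, is a finitely generated projective module of constant rank $b-a$ on $\operatorname{Spec}(R)$; this is exactly the input needed for the semilocal case, and it also streamlines the Dedekind case.

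For local and semilocal rings, I would invoke the classical result (see \cite{Lam}) that over a commutative semilocal ring every finitely generated projective module of constant rank is free, local rings being the special case of a single maximal ideal; the argument is a one-step lift of a basis of $P/\operatorname{Jac}(R)P$ together with Nakayama. Combined with the rank remark, this shows at once that stably free modules over such rings are free. For a Dedekind domain $R$, I would instead use the Steinitz structure theorem: every finitely generated projective $R$-module is isomorphic to $R^{k}\oplus I$ for some nonzero ideal $I$, and the pair $(k+1,[I])\in\ZZ_{\geq 0}\times\mathrm{Cl}(R)$ is a complete isomorphism invariant. If $P\cong R^{k}\oplus I$ is stably free, applying this uniqueness to $R^{k+a}\oplus I\cong R^b$ — equivalently, comparing top exterior powers — forces $[I]=[R]$, so $I$ is principal and $P$ is free.

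For a polynomial ring $R=D[x_1,\dots,x_m]$ over a PID $D$, I would appeal to the Quillen--Suslin theorem together with its extension from fields to principal ideal domains (see \cite{Lam}), which gives that every finitely generated projective $R$-module is free, hence in particular every stably free one. This is the step relying on the deepest input, and I expect it to be the main obstacle: unlike the other three cases, which reduce to short structural arguments, there is no elementary route here — one genuinely uses the solution of Serre's problem on projective modules (and the fact, going back to work of Bass and others, that the coefficient ring may be taken to be a PID rather than a field). Having disposed of all four cases, the corollary follows from \ref{prop:EPringsWithFC}.
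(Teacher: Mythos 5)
Your proposal is correct and follows essentially the same route as the paper: the paper likewise treats the statement as an instance of the equivalence in \ref{prop:EPringsWithFC} and simply cites \cite{Lam} (Example 2.7 and Corollary 5.8) for the fact that these four classes of rings are Hermite, i.e.\ that stably free modules over them are free. Your case-by-case verification (constant rank plus Nakayama for local/semilocal rings, the Steinitz invariant for Dedekind domains, Quillen--Suslin over a PID for polynomial rings) just unpacks the standard arguments behind that citation, including the correct observation that the polynomial-ring case genuinely requires the solution of Serre's problem.
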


\begin{proof}
See Example 2.7 and Corollary 5.8 of \cite{Lam}.
\end{proof}

\ref{prop:EP} shows that Theorem~\ref{thm:main} implies \ref{thm_hom_eq_cb_pd}.


\subsection{Matroids}

Recall that a matroid is a pair $(M, \I(M))$ where $\I(M)$ is a pure simplicial complex of finite dimension $n-1$ whose set of vertices is $M$ and satisfies the following \textit{exchange property}: if $A,B$ are simplices of $\I(M)$ such that $|A|>|B|$ then there exists $x\in A\setminus B$ such that $B\cup \{x\}$ is also a simplex of $\I(M)$.
We will usually denote a matroid $(M,\I(M))$ just by $M$. Note that contrary to the usual convention 
we allow $M$ to be infinite.

For a subset $F\subseteq M$ we define its \emph{rank} by $\rk(F) = \max\{|S| \tq S\subseteq F$ and $S\in \I(M) \}$. In particular, the rank $n$ of $M$ is the size of a maximal simplex of $\I(M)$.
A \textit{flat} of the matroid $M$ is a subset $F\subseteq M$ such that $\rk(F \cup \{x\}) = \rk(F)+1$  for all $x\in M\setminus F$.
The set of all flats is denoted by $\L(M)$ and it is an atomistic and semimodular lattice (a geometric lattice when it is finite), whose dimension $n$ is the rank of $M$. The unique minimal element $0_{\L(M)}$ of $\L(M)$ is the flat whose
elements are called loops and $1_{\L(M)} = M$ is the unique maximal
element of $\L(M)$. 

We apply our definitions to
$\S = \L(M) \setminus \{ 0_{\L(M)}, 1_{\L(M)} \}$, whose height equals $n$.
As our collection $\F$ of frames we take the
sets $\tau$ of rank one flats for which $\Sigma(\tau)$ 
is the poset of proper faces of an $(n-1)$-simplex. 
It is then easily seen that a collection of rank one flats is a frame if and only if any choice of non-loop representatives from each flat yields a basis of $M$.

For this family $\F$ the definition of the poset of (direct sum) decompositions and partial decompositions of a matroid 
from \cite{PW,W} coincides with our definition (cf. \ref{rk_def_PW}).
Indeed, any decomposition in the sense of \cite{PW,W} refines into an $n$-frame.
Thus we just write $\red{\D(M)} = \red{\D(\S,\F)}$, $\red{\PD(M)} = 
\red{\PD(\S,\F)}$, and $\CB(M)=\CB(\S,\F)$.

Next we check that (EP) holds for matroids. Let $F \subseteq F'$ be two flats
and $B \subseteq F$, $B' \subseteq F'$ be independent sets such that $|B| = \rk(F)$ and $|B'| =\rk(F')$. 
Then, by the exchange property, there is $B \subseteq B'' \subseteq F'$ such that $|B''| = \rk(F')$. 
Let $\sigma, \sigma' \in \red{\PD(M)}$ with $\sigma \leq \sigma'$. Then using this principle we construct a basis $\bar{B}$ of $M$ such that $|\bar{B} \cap F| = \rk(F)$ for any flat $F$ in $\sigma\cup\sigma'$.
Now the frame $\tau$ determined by $\bar{B}$ satisfies $P_1(\sigma, \tau)$ and $P_1(\sigma',\tau)$. 
This shows that \ref{thm:main} implies the matroid case of \ref{cor:remainingcases}.

\begin{example}
Let $M$ be the matroid whose independent sets are the linearly independent sets of a finite-dimensional vector space $V$.
Then $\L(M)$ is the poset of subspaces of $V$.
\end{example}

\begin{example}
Let $M$ be the matroid such that $\I(M)$ is a simplex.
Then $\L(M)$ is the face poset of $\I(M)$, which is a Boolean lattice.
Here $\red{{\D}(M)}\cup \{ \{M\}\}$ is the partition lattice, and $\red{\PD(M)}$ is the proper part of the lattice of partial partitions.
Then $\red{\PD(M)}$ is contractible (see Corollary 6.24 of \cite{PW}).
\end{example}

It would be interesting to see if one can formulate an analogous version of Rognes' conjecture in other contexts, as for example for matroids, since $\red{\PD(M)}\simeq \CB(M)$ by \ref{thm:main}.
However, the following example shows that at least the reformulation of the conjecture we give in the introduction does not always carry over.

\begin{example}
\label{ex_uniform_matroid}
Let $M = U_{n,k}$ be the uniform matroid with $2\leq k \leq n-1$ whose 
proper flats are subsets of $[n]$ with at most $k-1$ elements.
That is, the proper part of $\L(M)$ is the face poset of the $(k-2)$-skeleton of an $(n-1)$-simplex.
Note $M$ has rank $k$.
Then $\red{\PD(U_{n,k})}$ has dimension $2k-3$, but it is homotopy equivalent to the proper part of $\L(U_{n,k+1})$, which is a wedge of spheres of dimension $k-1$.
See discussion on p.~41 of \cite{PW}.
\end{example}

\subsection{Non-degenerate subspaces}

Suppose that $V$ is a finite-dimensional vector space over a field $\KK$.
If $\sigma$ is an automorphism of $\KK$ of order at most $2$ and $\epsilon \in \{1,-1\}$,
then an $(\epsilon,\sigma)$-sesquilinear form is a bi-additive form $\Psi:V\times V\to \KK$ which is $\KK$-linear in the first variable and for all $v,w\in V$ it holds that
\[ \Psi(v,w) = \epsilon \sigma(\Psi(w,v)).\]
We further assume that if $\KK$ has characteristic $2$ and $\sigma$ is the identity, then $\Psi(v,v) = 0$ for all $v\in V$.

Let $V$ be equipped with a non-degenerate $(\epsilon,\sigma)$-sesquilinear or quadratic form $\Psi$, and take $\S = \S(V,\Psi)_{\operatorname{nd}}$ to be the poset of non-trivial proper non-degenerate subspaces of $V$, which is not the proper part of a lattice in general.
Here we consider orthogonal decompositions, so we let $\F$ be the collection of orthogonal frames.
These are \textit{minimal} orthogonal decompositions of $V$, that is, direct sum decompositions of $V$ into minimal non-degenerate subspaces of $V$ that are pairwise orthogonal. Clearly, for a frame $\tau$ we have that
$\Sigma(\tau)$ is the face poset of the boundary of a $(|\tau|-1)$-simplex. 
If $(V,\Psi)$ is not a symplectic space, then a minimal non-degenerate subspace of $V$ has dimension $1$; otherwise, a minimal non-degenerate subspace has dimension $2$.
Therefore, if we write $n = \dim(V)$ and $2n = \dim(V)$ in the symplectic case, then $\S(V,\Psi)_{\operatorname{nd}}$ has height $n$ and a frame has size $n$.
We write $\red{\D(V,\Psi)_{\operatorname{nd}}} = \red{\D(\S,\F)}$, $\red{\PD(V,\Psi)_{\operatorname{nd}}} =\red{\PD(\S,\F)}$ and $\CB(V,\Psi)_{\operatorname{nd}} = \CB(\S,\F)$.
The isometry group $G$ of $(V,\Psi)$ has a natural action on $\S(V,\Psi)_{\operatorname{nd}}$ that preserves $\F$ and hence these posets become $G$-posets.

We show that orthogonal complementation implies that our collection satisfies (EP).
To be more precise, assume we have two partial orthogonal decomposition $\sigma = \{S_1,\ldots,S_r\}$ and $\sigma' = \{T_1,\ldots,T_m\}$ such that $\sigma$ is finer than $\sigma'$.
Take an orthogonal frame $\sigma_i$ of each non-degenerate subspace $S_i$.
By orthogonality, it is clear that $\cup_i \sigma_i$ is a frame for the (orthogonal) sum $S_1\oplus \cdots \oplus S_r$.
Now, for each $1\leq j \leq m$, let $W_j$ be the orthogonal complement of the span of the $S_i$ contained in $T_j$:
\[ W_j = \gen{S_i\tq S_i\leq T_j}^\perp.\]
Similarly, we can take frames $\tau_j$ of each $W_j$, and by orthogonality we get a frame decomposition $\tau := \bigcup_j \tau_j \cup \bigcup_i \sigma_i$ of $T = T_1\oplus\cdots\oplus T_m$.
Finally, if $\rho$ is a frame of the orthogonal complement $T^\perp$, then $\tau\cup \rho$ is a frame of $V$ such that its elements span the $S_i$ and the $T_j$.
Note that these orthogonal complements are non-degenerate since $V$ and the elements of $\sigma\cup\sigma'$ are non-degenerate vector spaces.
This completes the verification of (EP) and shows that \ref{thm:main} implies the 
non-degenerate vector space case of \ref{cor:remainingcases}.

Finally, we mention that $\red{\PD(V,\Psi)}_{\operatorname{nd}}$ is indeed homotopy equivalent to $\red{\D(V,\Psi)}_{\operatorname{nd}}$ (see Corollary 6.17 of \cite{PW}).
In particular, $\red{\PD(V,\Psi)}_{\operatorname{nd}}$ is not spherical since it collapses to a smaller dimensional subposet that is not contractible in general (cf. Theorem 6.19 of \cite{PW}).
This shows that the analogue of Rognes' conjecture is not true in this context.

\subsection{Isotropic subspaces}
\label{sec_ex_isotropic_subspaces}
If $V = \KK^{2n}$ is equipped with a symplectic form $\Psi:V\times V\to \KK$, there is another natural choice for $\S$ and $\F$ that is closer to the building-like situation of \ref{sec_ex_free_modules}:
let $\S = \S(V,\Psi)_{\operatorname{ti}}$ be the poset of non-zero totally isotropic subspaces of $\KK^{2n}$. The order complex of $\S(V,\Psi)_{\operatorname{ti}}$ is the building of type $\mathtt{C}_n$ over $\KK$. It comes with a natural action by the symplectic group $G = \operatorname{Sp}_{2n}(\KK)$.
Let $\F$ be the collection of sets of $2n$ lines $\tau =\{L_1, \ldots, L_n, L_{-n}, \ldots, L_{-1}\}$ that are spanned by a symplectic basis of $\KK^{2n}$. That is, we have $\Psi(L_i, L_j) = \{0\}$ if and only if $i\neq -j$ (see \cite[Definition 2.8]{BH}).
The join in $\S(V,\Psi)_{\operatorname{ti}}$ is given by the common span of subspaces; this join only exists if such a span is totally isotropic. Hence, the subposet $\Sigma(\tau)\subseteq \S(V,\Psi)_{\operatorname{ti}}$ is given by all totally isotropic subspaces spanned by subsets of $\tau$. Such a $\tau$ is indeed a frame in the sense of \ref{def_frame} because the meet in $\S_{\operatorname{ti}}$ is given by intersection of subspaces and $\S_{\operatorname{ti}}$ is the proper part of a lattice. That $\F$ satisfies (EP) follows from the fact that every partial symplectic basis can be extended to a symplectic basis (this can be deduced from \cite[Lemma I.2.6]{MH}).
It is also easy to see that the set $\F$ is preserved by the action of $\operatorname{Sp}_{2n}(\KK)$ on $\S(V,\Psi)_{\operatorname{ti}}$.
We write $\red{\D(V,\Psi)_{\operatorname{ti}}} = \red{\D(\S,\F)}$, $\red{\PD(V,\Psi)_{\operatorname{ti}}} =\red{\PD(\S,\F)}$ and $\CB(V,\Psi)_{\operatorname{ti}} = \CB(\S,\F)$.

In contrast to the examples above, $\Sigma(\tau)$ is not the poset of non-empty faces of the boundary of a simplex. Instead, it is isomorphic to the poset of non-empty faces of the boundary of an $n$-dimensional cross polytope. This is the Coxeter complex of type $\mathtt{C}_n$ and $\Delta \Sigma(\tau)$ is the corresponding apartment spanned by the lines $L_1, \ldots, L_n, L_{-n}, \ldots, L_{-1}$ in the building $\Delta \S$.
Similarly to the case of the type $\mathtt{A}_n$ building described in \ref{sec_ex_free_modules}, the complex $\CB(\S,\F)_{\operatorname{ti}}$ is obtained by taking the vertex set of $\S$ and adding one simplex for every embedded apartment. This complex has dimension $3^n-2$. Our main result, \ref{thm:main}, says that it is homotopy equivalent to the complex $\red{\PD(\S,\F)}_{\operatorname{ti}}$, which here has dimension $4n-3$.
One can show that this dimension is not optimal in the sense that $\red{\PD(\S,\F)}_{\operatorname{ti}}$ is homotopy equivalent to a subposet of smaller dimension.

\subsection{Free groups}

Let $F_n$ be the free group of rank $n$ and $\FC_n$ the poset of non-trivial, proper free factors of $F_n$. The order complex of $\FC_n$ is the free factor complex introduced by Hatcher--Vogtmann \cite{HV1}.
It is a consequence of the Kurosh subgroup theorem that $\check{\FC_n}$ is a lattice. It comes with a natural action of $G = \operatorname{Aut}(F_n)$. For some elementary properties of free factors, see \cite[Section 2, Problems 30--37]{MKS}.

We apply our definitions to
$\S = \FC_n$. 
As our collection $\F$ of frames we take the
collection of rank one free factor decompositions as defined for example, by Hatcher--Vogtmann \cite{HV3}. That is, $\F$ consists of sets of rank one free factors $\tau = \{H_1,\ldots,H_n\}$ such that 
\[ F_n = \langle H_1,\ldots,H_n\rangle = H_1 * \cdots * H_n. \]
For each such $\tau$, the poset $\Sigma(\tau)$ 
is isomorphic to poset of proper faces of an $(n-1)$-simplex. 
The action of $\operatorname{Aut}(F_n)$ on $\FC_n$ preserves $\F$.
We denote the corresponding posets by $\red{\D(F_n)} = \red{\D(\S,\F)}$, $\red{\PD(F_n)} = \red{\PD(\S,\F)}$, and $\CB(F_n) = \CB(\S,\F)$.

The family $\F$ satisfies (EP).
Indeed, to establish (EP) it is enough to show that if
$\{H_1,\ldots,H_r\}\in \red{\PD(F_n)}$ is a partial decomposition where each $H_i$ is contained in a free factor $H$, then $\{H_1,\ldots,H_r\}\in \red{\PD(H)}$ (the argument then is as in \ref{ex_EP}).
That this is true follows from the fact that $\check{\FC_n}$ is a lattice and if $K\leq H\leq F_n$ are free factors of $F_n$ then $K$ is a free factor of $H$  (in our situation, we take $K$ to be the join of the $H_i$).
This shows that \ref{thm:main} implies the free group case of \ref{cor:remainingcases}.

\section{Proof of \texorpdfstring{\ref{thm:main}}{Theorem 2.9}}
\subsection{Outline}
We start with an outline of the proof of \ref{thm:main} for the case where $\S = \S(\ZZ^n)$ is the poset of non-trivial, proper direct summands of $\ZZ^n$ and $\F$ is the set of all collections $\{L_1, \ldots, L_n\}$ of lines that form a direct sum decomposition $\ZZ^n = L_1\oplus  \cdots \oplus L_n$.
We want to show that there is a $\GL_n(\ZZ)$-homotopy equivalence between $\CB = \CB( \S(\ZZ^n), \F)$ and $\red{\PD} = \red{\PD(\S(\ZZ^n), \F)}$. To do this, we define poset maps $u\colon \Delta \red{\PD} \to \CB$ and $m\colon \Delta\CB \to \red{\PD}$.

The map $u$ is defined as follows: an element of $\Delta \red{\PD}$ is a chain in $\red{\PD}$; that is, a collection $\sigma_0 < \cdots < \sigma_t$, where each $\sigma_i = \{A_1, \ldots, A_k\}$ is a set of direct summands such that $A_1\oplus \cdots \oplus A_k $ is again a direct summand of $\ZZ^n$.
This is the same as saying that the $A_i$ have pairwise trivial intersection and are basis compatible, that is,~there is a basis $B$ of $\ZZ^n$ such that $A_i \cap B$ is a basis of $A_i$ for all $i$. 
We define $u\big(\,\{\sigma_0, \ldots, \sigma_t\}\,\big)\coloneqq \bigcup_{i=0}^t \sigma_i$ as the set of all direct summands that show up in (at least) one of the $\sigma_i$.
To see that this union is an element of $\CB$, we have to verify that 
it is basis compatible.
By definition, there is an appropriate basis for each single $\sigma_i$.
Since the $\sigma_i$ are ordered by refinement, each $A \in \sigma_i$
is contained on some $A' \in \sigma_{i+1}$. Using this and the
extension property (EP) for free abelian groups, one can find an appropriate basis for $\bigcup_{i=0}^t \sigma_i$. 

To define $m$, first note that an element of $\Delta \CB$ is a chain of non-empty simplices $\sigma_0\subset \cdots \subset \sigma_t$ from the common basis complex. So each $\sigma_i = \{A_1, \ldots, A_k\}$ is a set of basis compatible direct summands in $\ZZ^n$ and $\sigma_i\subset \sigma_{i+1}$. From this chain, we want to obtain an element in $\red{\PD}$, that is,~a collection of basis-compatible direct summands that have pairwise trivial intersection. We do this in three steps: first, we consider for each $i$ the set of all non-trivial intersections of summands in $\sigma_i$, denoted by $\Cl(\sigma_i)$. Then,
for each $i$ we retain the inclusionwise minimal intersections, that is, the set $\min(\Cl(\sigma_i))$. Each $\min(\Cl(\sigma_i))$ is already a set of basis-compatible 
summands with pairwise trivial intersection. As a last step, we define
$m(\sigma_0\subset \cdots \subset \sigma_t)$ as the set of inclusionwise maximal elements in the union $\min(\Cl(\sigma_0)) \cup \cdots \cup \min(\Cl(\sigma_t))$. This assures that $m$ is indeed compatible with the partial orders on $\Delta \CB$ and $\red{\PD}$ and hence it is a poset map. 
The following diagram provides an example for the case $\ZZ^6 = \langle e_1, e_2, e_3, e_4, e_5, e_6\rangle$:
\begin{gather*}
\underbrace{
\xymatrixcolsep{4mm}
\xymatrix{
    \sigma_0 = \{\langle e_1\rangle, \langle e_1, e_2\rangle, \langle e_2, e_3\rangle,  \langle e_4, e_5\rangle\} \ar@{~>}[d] &\subset & \sigma_1 = \left\{{\begin{array}{c}  \langle e_1\rangle, \langle e_1, e_2\rangle, \langle e_2, e_3\rangle,\\ \langle e_2, e_3, e_4\rangle,  \langle e_4, e_5\rangle, \langle e_6\rangle  \end{array}} \right\} \ar@{~>}[d] \\
    \Cl(\sigma_0) = \left\{{\begin{array}{c} \langle e_1\rangle, \langle e_1, e_2\rangle, \langle e_2\rangle,\\ \langle e_2, e_3\rangle, \langle e_4, e_5\rangle  \end{array}} \right\} \ar@{~>}[d] & & \Cl(\sigma_1) =  \left\{{\begin{array}{c}  \langle e_1\rangle, \langle e_2\rangle, \langle e_1, e_2\rangle, \langle e_2, e_3\rangle, \\ \langle e_2, e_3, e_4\rangle, \langle e_4\rangle, \langle e_4, e_5\rangle, \langle e_6\rangle  \end{array}} \right\}\ar@{~>}[d] \\
    \min\Cl(\sigma_0) = \{\langle e_1\rangle, \langle e_2\rangle, \langle e_4, e_5\rangle\}& & \min\Cl(\sigma_1) = \{\langle e_1\rangle, \langle e_2\rangle, \langle e_4\rangle, \langle e_6\rangle \}
    }
    }\\
    \mapsto m(\{\sigma_0,\sigma_1\}) = \{\langle e_1\rangle, \langle e_2\rangle, \langle e_4, e_5\rangle, \langle e_6\rangle\}
\end{gather*}

We claim that $u$ and $m$ define homotopy equivalences. To see this, we will verify that the compositions
\begin{equation*}
    u \circ \Delta m \colon \Delta\Delta\CB \to \CB \quad\text{and} \quad m \circ \Delta u \colon \Delta\Delta \red{\PD} \to  \red{\PD}\
\end{equation*}
are homotopy equivalences.
On geometric realisations, the domains of these maps are iterated barycentric subdivisions. This makes it a bit hard to keep track of the images of general elements. However, the situation is easier for elements of the original posets $\red{\CB}\subseteq \Delta \Delta \red{\CB}$ and $\red{\PD} \subseteq \Delta \Delta \red{\PD}$. Here, the map $u \circ \Delta m$ acts as $\min(\Cl(\cdot))$ and $m \circ \Delta u$ acts as the identity:

If $\alpha \in \Delta\Delta\CB$, then it is a chain of chains of simplices in the common basis complex. The simplest case is when $\alpha$ is identified with a single element $\sigma\in \CB$ (so $\alpha= \{\{\sigma\} \}$ is a chain that has only one element and this one element is a chain that again has just one element). Here, $\Delta m(\alpha) = \{ \min(\Cl(\sigma))\}$ is the one-element chain consisting of the set of all minimal non-trivial intersections of elements of $\sigma$. The image $u(\Delta m(\alpha)) = \min(\Cl(\sigma))$ then is the set consisting of all these minimal intersections.
Similarly, the simplest type of $\beta \in \Delta\Delta \red{\PD}$ is   $\beta= \{\{\sigma\} \}$ with $\sigma \in \red{\PD}$. We have $\Delta u(\beta) = \{ \sigma\}$. But then $m(\Delta u(\beta)) = \min(\Cl(\sigma)) = \sigma$ because $\sigma$ is an element in $\red{\PD}$, so the only non-trivial intersections of its elements are the elements themselves.

\subsection{Detailed proof}
In what follows, we will not distinguish between a simplicial complex $K$ and its face poset (that is,~the poset of non-empty faces ordered by inclusion).
Recall that $\Delta \S$ denotes the order complex of a poset $\S$.
If $K$ is a simplicial complex then its order complex $\Delta K$ is the barycentric subdivision of $K$.
If $G$ is a group acting simplicially on $K$, we say that $K$ is a $G$-complex.
In that case, the face poset of $K$ is also a $G$-poset.
Analogously, if $\S$ is a $G$-poset then $\Delta \S$ is a $G$-complex.

For an order-preserving map $f : \S \rightarrow \T$ between posets, we write $\Delta f$ for the map
\[\Delta f : \left\{ \begin{array}{ccc} \Delta \S & \to & \Delta \T \\ \{ x_0 , \ldots , x_r \} & \mapsto & \{ f(x_0) , \ldots , f(x_r) \} \end{array} \right. .\]
Clearly, if $f$ is a $G$-equivariant map, then $\Delta f$ and its geometric realization are $G$-{e\-qui\-va\-riant}.

If $\S,\T$ are $G$-posets and $f,g:\S\rightarrow \T$ are two order-preserving $G$-equivariant maps such that $f(x)\leq g(x)$ for all $x\in \S$, then the geometric realizations of $f$ and $g$ are $G$-homotopy equivalent.

\bigskip

For a subset $\pi$ of a poset $\S$, $\max(\pi)$ denotes the set of maximal elements of $\pi$. Analogously defined is $\min(\pi)$.
The following is an immediate consequence of Quillen's Theorem A (see e.g.~\cite{Qui78}) and the equivariant Whitehead  Theorem.
    
\begin{proposition}
\label{prop:homotEquivPosetOrdComplex}
For a $G$-poset $\S$, the poset maps
  $$\max[\S]: \left\{ \begin{array}{ccc} \Delta \S & \to & \S \\ c & \mapsto & \max(c) \end{array} \right. \quad \text{ and } \quad 
  \min[\S]: \left\{ \begin{array}{ccc} \Delta \S & \to & \S^{\op} \\ c & \mapsto & \min(c) \end{array} \right.$$ 
  give rise to $G$-homotopy equivalences between $\Delta\Delta \S$ and $\Delta \S = \Delta \S^{\op}$.
\end{proposition}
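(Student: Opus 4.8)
The plan is to deduce the statement directly from Quillen's Theorem A together with the equivariant Whitehead theorem, as the sentence preceding it suggests. First I would verify that $\max[\S]$ is a well-defined order-preserving $G$-equivariant map from the face poset of $\Delta \S$ to $\S$: a simplex $c$ of $\Delta \S$ is a finite non-empty chain in $\S$, so $\max(c)$ exists; $c\subseteq c'$ forces $\max(c)\leq \max(c')$; and $\max(c^g)=\max(c)^g$ because $G$ acts by poset automorphisms. Dually, $\min[\S]$ is an order-preserving $G$-equivariant map from the face poset of $\Delta \S$ into $\S^{\op}$. Applying the order-complex functor then yields $G$-equivariant simplicial maps $\Delta(\max[\S])\colon \Delta\Delta \S\to \Delta \S$ and $\Delta(\min[\S])\colon \Delta\Delta \S\to \Delta \S^{\op}=\Delta \S$, the last identification of $G$-complexes being valid because comparability in $\S$ is a symmetric relation. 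These are the maps claimed to be $G$-homotopy equivalences.

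Next, forgetting the $G$-action, I would check the hypothesis of Quillen's Theorem A for $f=\max[\S]$. For $x\in \S$, the fibre $f/x=\{c\in \Delta \S\colon \max(c)\leq x\}$ consists precisely of the chains contained in $\S_{\leq x}$, so it is the face poset of $\Delta(\S_{\leq x})$; this poset is conically contractible via $c\mapsto c\cup\{x\}$ (equivalently, $\S_{\leq x}$ has a greatest element), so its order complex is contractible. Hence Theorem A gives that $\Delta(\max[\S])$ is a homotopy equivalence $|\Delta\Delta \S|\simeq|\Delta \S|$. Running the same argument over $\S^{\op}$ handles $\min[\S]$: there the relevant fibres are the face posets of $\Delta(\S_{\geq x})$, contractible because $\S_{\geq x}$ has a least element.

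To upgrade these to $G$-homotopy equivalences I would apply the equivariant Whitehead theorem, which reduces the claim to showing that for every subgroup $H\leq G$ the restriction to $H$-fixed points is a homotopy equivalence. The point is that an order-preserving self-bijection of a finite chain is the identity, so a simplex of $\Delta \S$ fixed setwise by $H$ is fixed pointwise; therefore $(\Delta \S)^H=\Delta(\S^H)$ as simplicial complexes, and iterating this observation, $(\Delta\Delta \S)^H=\Delta\Delta(\S^H)$ while $(\Delta \S)^H=\Delta(\S^H)$. Under these identifications the restriction of $\Delta(\max[\S])$ to $H$-fixed points is exactly $\Delta(\max[\S^H])$, which is a homotopy equivalence by the preceding paragraph applied to the poset $\S^H$; likewise for $\min[\S]$. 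The equivariant Whitehead theorem then yields that $\Delta(\max[\S])$ and $\Delta(\min[\S])$ are $G$-homotopy equivalences.

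I do not expect a genuine obstacle here; the only place requiring some care is the fixed-point bookkeeping for the second barycentric subdivision, namely checking that $(\Delta\Delta \S)^H$ is again the order complex of a poset on which the non-equivariant Theorem A argument applies verbatim, and that the map restricts to the corresponding $\max[\cdot]$ or $\min[\cdot]$ map for $\S^H$. Everything else is a direct invocation of Quillen's Theorem A and the equivariant Whitehead theorem.
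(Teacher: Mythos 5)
Your argument is correct and follows exactly the route the paper takes: the paper gives no written proof beyond noting that the proposition is an immediate consequence of Quillen's Theorem~A and the equivariant Whitehead theorem, which is precisely the argument you carry out (contractible fibres $\S_{\leq x}$, resp.\ $\S_{\geq x}$, plus the identification $(\Delta\S)^H=\Delta(\S^H)$ on fixed points). Your write-up simply supplies the details the authors left implicit.
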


From now on, we fix a $G$-poset $\S$ and a non-empty collection $\F$ of frames stable under the action of $G$.
For a subset $\sigma\subseteq \S$, we denote by $\Cl(\sigma)$ the set of all meets of non-empty subsets of $\sigma$ that exist in $\S$.

\begin{lemma}
\label{lem_P_1_for_closure}
    Let $\sigma\subseteq \S$ and let $\tau$ be a frame such that $P_1(\sigma,\tau)$ holds. Then $P_1(\Cl(\sigma),\tau)$ holds as well.
\end{lemma}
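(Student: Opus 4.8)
The statement $P_1(\Cl(\sigma),\tau)$ asks that every element of $\Cl(\sigma)$ — that is, every meet $\bigwedge_{y\in\pi}y$ that exists in $\S$ for some non-empty $\pi\subseteq\sigma$ — lies in $\bool(\tau)$. Since $P_1(\sigma,\tau)$ holds, each $y\in\pi$ already lies in $\bool(\tau)$, so the natural approach is: reduce from meets of elements of $\sigma$ to meets of elements of $\bool(\tau)$, and then invoke the closure property of frames built into Definition~\ref{def_frame}. Concretely, I would fix a non-empty $\pi\subseteq\sigma$ whose meet $z=\bigwedge_{y\in\pi}y$ exists in $\S$, set $\pi'=\pi\subseteq\bool(\tau)$ (viewing $\pi$ as a subset of $\bool(\tau)$, which is legitimate by $P_1(\sigma,\tau)$), and observe that $z$ is a lower bound in $\S$ for all elements of $\pi'$. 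Then the bracketed implication in Eq.~(\ref{eq_def_frame}), applied with the witness $x=z$, gives directly that $\bigwedge_{y\in\pi'}y\in\bool(\tau)$. It remains only to note that the meet computed inside $\S$ is the same whether we think of $\pi$ as sitting in $\sigma$ or in $\bool(\tau)$ — meets in a poset are intrinsic, so $\bigwedge_{y\in\pi'}y=\bigwedge_{y\in\pi}y=z$, and hence $z\in\bool(\tau)$.

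The one subtlety worth spelling out is why the hypothesis ``$\bigwedge_{y\in\sigma}y\in\Sigma(\tau)$'' in Eq.~(\ref{eq_def_frame}) is even applicable: the frame axiom quantifies over \emph{all} subsets $\sigma\subseteq\bool(\tau)$ with a lower bound in $\S$ and asserts their meet exists and lies in $\bool(\tau)$. So I do not need to assume a priori that $z\in\bool(\tau)$; the axiom produces the meet in $\bool(\tau)$ for me, and uniqueness of meets forces it to coincide with the element $z$ I started from. I would also remark (as the paper already does right after Definition~\ref{def_frame}) that this shows $\bool(\tau)$ is closed under existing meets in $\S$, which is exactly the content being used. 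If $\sigma$ is empty the statement is vacuous, and if $\Cl(\sigma)$ contains an element arising from a singleton $\pi=\{y\}$ then that element is just $y\in\bool(\tau)$ by $P_1(\sigma,\tau)$, so the genuinely new cases are meets of two or more elements.

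There is essentially no hard step here: the lemma is a direct unwinding of the frame axiom, and the ``obstacle'' is purely bookkeeping — making sure that the set $\pi$ is treated as a subset of $\bool(\tau)$ so that Eq.~(\ref{eq_def_frame}) applies, and that the meet is the same object in both posets. I would write the proof in three or four lines: (i) take $z=\bigwedge_{y\in\pi}y\in\Cl(\sigma)$ for non-empty $\pi\subseteq\sigma$; (ii) by $P_1(\sigma,\tau)$, $\pi\subseteq\bool(\tau)$, and $z$ is a lower bound for $\pi$ in $\S$; (iii) apply Eq.~(\ref{eq_def_frame}) with $x=z$ to conclude $\bigwedge_{y\in\pi}y\in\bool(\tau)$, i.e.\ $z\in\bool(\tau)$; (iv) since $\pi$ was arbitrary, $P_1(\Cl(\sigma),\tau)$ holds.
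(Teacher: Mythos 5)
Your argument is correct and is exactly the paper's proof, just unwound: the paper cites the observation made right after Definition~\ref{def_frame} that $\bool(\tau)$ is closed under existing meets in $\S$ (obtained by taking $x$ to be the meet itself in Eq.~(\ref{eq_def_frame})), which is precisely the step you spell out with $x=z$. Nothing is missing.
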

\begin{proof}
    This follows immediately because $\bool(\tau)$ is closed under taking (existing) meets in $\S$, as we observed after \ref{def_frame}.
\end{proof}

The next lemma is a simple consequence of \ref{lem_P_1_for_closure}.

\begin{lemma} 
  The map $$\Cl : \left\{ \begin{array}{ccc} \CB(\S,\F) & \to & \CB(\S,\F) \\
    \sigma & \mapsto & \Cl(\sigma) \end{array} \right.$$ is a well-defined inclusion-preserving $G$-equivariant map and satisfies
    \begin{itemize}
        \item $\Cl(\Cl(\sigma)) = \Cl(\sigma)$; and
        \item $\sigma \subseteq \Cl(\sigma)$.
    \end{itemize}
    That is, $\Cl$ is a closure operation on the face poset of $\CB(\S,\F)$. In particular, $\Cl$ is a $G$-homotopy equivalence
    onto its image. 
\end{lemma}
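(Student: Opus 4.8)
The plan is to verify the four asserted properties of $\Cl$ essentially by unwinding the definitions of meet and of $\Cl$, and then to read off the homotopy statement from the elementary fact recalled above that pointwise-comparable order-preserving $G$-maps have $G$-homotopy equivalent geometric realizations.

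For well-definedness I would use \ref{lem_P_1_for_closure}: given $\sigma\in\CB(\S,\F)$, pick $\tau\in\F$ with $P_1(\sigma,\tau)$; then $P_1(\Cl(\sigma),\tau)$ holds, so $\Cl(\sigma)\subseteq\bool(\tau)$. In particular $\Cl(\sigma)$ lies in the finite poset $\bool(\tau)$ and is basis compatible, hence is again a face of $\CB(\S,\F)$; it is non-empty because $\sigma\subseteq\Cl(\sigma)$. The inclusion $\sigma\subseteq\Cl(\sigma)$ is immediate, since each $x\in\sigma$ is the meet of the singleton $\{x\}$. Monotonicity is also immediate: if $\sigma\subseteq\sigma'$, then every meet of a non-empty subset of $\sigma$ is a meet of a non-empty subset of $\sigma'$, so $\Cl(\sigma)\subseteq\Cl(\sigma')$. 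For $G$-equivariance I would note that each $g\in G$, being a poset automorphism, carries the set of lower bounds of a subset $A\subseteq\S$ bijectively onto the set of lower bounds of $A^g$, hence carries greatest lower bounds to greatest lower bounds and preserves their existence; applied to the non-empty subsets of $\sigma$ this gives $\Cl(\sigma^g)=\Cl(\sigma)^g$.

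The only step requiring a short argument is idempotence $\Cl(\Cl(\sigma))=\Cl(\sigma)$, and here the inclusion ``$\supseteq$'' is just the special case $\Cl(\sigma)\subseteq\Cl(\Cl(\sigma))$ of extensivity applied to $\Cl(\sigma)$, so the content is ``$\subseteq$''. I would take $z\in\Cl(\Cl(\sigma))$, write $z=\bigwedge_{w\in B}w$ for a non-empty $B\subseteq\Cl(\sigma)$ whose meet exists, and write each $w\in B$ as $w=\bigwedge_{x\in A_w}x$ for a non-empty $A_w\subseteq\sigma$ whose meet exists. Put $A=\bigcup_{w\in B}A_w\subseteq\sigma$. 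Then $z$ is a lower bound of $A$ (since $z\leq w\leq x$ whenever $x\in A_w$), and any lower bound $t$ of $A$ is in particular a lower bound of each $A_w$, so $t\leq w$ for all $w\in B$ and hence $t\leq z$; thus $z$ is the greatest lower bound of $A$, so $\bigwedge_{x\in A}x$ exists and equals $z$, exhibiting $z$ as a meet of a non-empty subset of $\sigma$, i.e.~$z\in\Cl(\sigma)$. (One could instead iterate the associativity identity Eq.~(\ref{eq_associativity_join}), but the greatest-lower-bound formulation avoids having to check existence at each step.)

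Finally, $\Cl$ restricts to a $G$-equivariant poset map $c\colon\CB(\S,\F)\to\Cl(\CB(\S,\F))$ onto its image, with $\iota\colon\Cl(\CB(\S,\F))\hookrightarrow\CB(\S,\F)$ the inclusion. By idempotence $c\circ\iota=\Id$ on $\Cl(\CB(\S,\F))$, and $\sigma\leq(\iota\circ c)(\sigma)$ for all $\sigma$ by extensivity, so by the comparable-maps fact $|\Delta(\iota\circ c)|$ is $G$-homotopic to the identity while $|\Delta(c\circ\iota)|$ is the identity; since $\Delta$ is functorial, this exhibits $|\Delta c|$ as a $G$-homotopy equivalence onto the realization of $\Cl(\CB(\S,\F))$. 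I do not anticipate any genuine obstacle here; the only point needing care is the bookkeeping of which meets exist in the idempotence step, which the greatest-lower-bound argument handles automatically.
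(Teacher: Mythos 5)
Your proposal is correct and follows essentially the same route as the paper: well-definedness via \ref{lem_P_1_for_closure}, direct verification of extensivity, monotonicity, idempotence and $G$-equivariance, and the standard closure-operator argument that $\Cl$ is a $G$-homotopy equivalence onto its image (using $\Cl\circ i = \Id$ on the image and $i\circ\Cl \geq \Id$ together with the comparable-maps fact). The only minor difference is that you establish idempotence by a direct greatest-lower-bound computation, whereas the paper simply invokes the associativity of the meet, Eq.~(\ref{eq_associativity_join}); the content is the same.
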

\begin{proof}
    That the map is well-defined, that is,~that the image of every $\sigma\in \CB(\S,\F)$ is again contained in $\CB(\S,\F)$, follows from \ref{lem_P_1_for_closure}.

    That it is inclusion-preserving and $\sigma \subseteq \Cl(\sigma)$ is immediate from the definitions and $\Cl(\Cl(\sigma)) = \Cl(\sigma)$ follows from the associativity of the meet on $\S$, see Eq. (\ref{eq_associativity_join}).
    Finally, it is $G$-equivariant since the action of $G$ preserves existing meets.

    For the homotopy equivalence, let $Y$ be the image of $\Cl$, and $i:Y\to \CB(\S,\F)$ the inclusion.
    Then $Y$ is a $G$-poset and $i$ is $G$-equivariant.
    Since $\Cl \circ i = \Id_Y$ and $i \circ \Cl \geq \Id_{\CB(\S,\F)}$, we see that $i$ and $\Cl$ are homotopy inverses of each other, and so $\Cl:\CB(\S,F)\to Y$ is a $G$-homotopy equivalence.
\end{proof}

To prove our main theorem, we will produce homotopy equivalences between suitable subdivisions of the order complexes of $\CB(\S,\F)$ and $\red{\PD(\S,\F)}$.
In the following lemma, we introduce the first map.

\begin{lemma}
\label{lem_well_defined_m}
  The following is a well-defined and order-preserving $G$-equivariant map
  $$m: \left\{ \begin{array}{ccc}
  \Delta \CB(\S,\F) & \to & \red{\PD(\S,\F)} \\
  c  & \mapsto & m(c) = \max\Big( \,\bigcup_{\sigma \in c} \min\big(\,\Cl(\sigma)\,\big)\,\Big)
  \end{array} \right. .$$ 
\end{lemma}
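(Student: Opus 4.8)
The plan is to verify the three assertions separately: (i) for each chain $c \in \Delta\CB(\S,\F)$, the set $m(c)$ is a non-empty partial decomposition, so that $m$ is well-defined; (ii) $m$ is order-preserving; and (iii) $m$ is $G$-equivariant. The last point is essentially free: it was already established that $\Cl$ is $G$-equivariant, and both $\min$ and $\max$ commute with the $G$-action since $G$ acts by poset automorphisms; the union over $\sigma \in c$ is indexed by the chain $c$, which is permuted by $G$ consistently. So the real content is in (i) and (ii).

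For well-definedness, I would first observe that every $\sigma \in c$ is basis compatible, and by \ref{lm:EPproperty} applied to the chain $c$ (a chain of basis compatible sets — here one uses (EP) and the fact that chains in $\CB(\S,\F)$ are also chains in the relevant sense), there is a single frame $\tau \in \F$ such that $P_1(\sigma,\tau)$ holds for all $\sigma \in c$. Wait — one must be slightly careful here: \ref{lm:EPproperty} is stated for chains in $\red{\PD(\S,\F)}$, but a chain in $\CB(\S,\F)$ need not consist of partial decompositions. The correct move is: by \ref{lem_P_1_for_closure}, $P_1(\Cl(\sigma),\tau)$ holds, and $\min(\Cl(\sigma))$ is an antichain of elements of $\bool(\tau)$, so (by the frame axioms, in particular that distinct elements of $\tau$ sharing a lower bound is ruled out and $\bool(\tau)$ is the proper part of a lattice) $\min(\Cl(\sigma)) \in \red{\PD(\S,\F)}$; this needs a single common frame across all $\sigma \in c$. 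To get that common frame, I would apply (EP)-type reasoning directly: pick $\tau_t \in \F$ with $P_1(\sigma_t,\tau_t)$ for the top element $\sigma_t$ of $c$, but since $c$ is only $\subseteq$-ordered and not refinement-ordered this does not immediately propagate downward. The cleanest fix is to note that a $\subseteq$-chain $\sigma_0 \subset \cdots \subset \sigma_t$ in $\CB(\S,\F)$ has $\sigma_t$ basis compatible, so a single frame $\tau$ with $P_1(\sigma_t,\tau)$ automatically satisfies $P_1(\sigma_i,\tau)$ for all $i$ since $\sigma_i \subseteq \sigma_t$. That is the key simplification — no (EP) is needed for well-definedness, only for the map $u$. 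Then $\min(\Cl(\sigma_i))$ all lie in $\bool(\tau)$, and I would show that $\bigcup_i \min(\Cl(\sigma_i)) \subseteq \bool(\tau)$ is basis compatible, hence its set of maximal elements $m(c)$ is an antichain in $\bool(\tau)$; since antichains in $\bool(\tau)$ (the proper part of a lattice, equal to $\Sigma(\tau)$) are partial decompositions with respect to $\tau$ — using $P_2$ which holds because two distinct elements of $\tau$ have no common lower bound, so each $x \in \tau$ lies below at most one member of an antichain in $\bool(\tau)$ — we conclude $m(c) \in \red{\PD(\S,\F)}$. Non-emptiness is clear since $c$ is non-empty and each $\Cl(\sigma)$ contains $\sigma \neq \emptyset$.

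For order-preservation, let $c \subseteq c'$ be chains in $\CB(\S,\F)$; I must show $m(c) \leq m(c')$ in the refinement order, i.e.\ every $x \in m(c)$ lies below some $y \in m(c')$. Since $c \subseteq c'$, we have $\bigcup_{\sigma \in c}\min(\Cl(\sigma)) \subseteq \bigcup_{\sigma \in c'}\min(\Cl(\sigma))$, so any $x \in m(c)$ is an element of the larger union; it is then $\leq$ some maximal element $y$ of that larger union, and $y \in m(c')$. Hence $m(c) \leq m(c')$, as required. The main obstacle I anticipate is not any single hard step but rather bookkeeping: making sure the passage $\Cl \to \min \to \bigcup \to \max$ genuinely lands in $\red{\PD(\S,\F)}$ requires knowing antichains in $\bool(\tau)$ are partial decompositions, which is exactly where the frame axioms (Eq.~(\ref{eq_def_frame}) and its consequences noted after \ref{def_frame}) are used — everything else is formal manipulation of unions, minima, and maxima in posets.
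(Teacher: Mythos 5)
Your overall architecture matches the paper's proof: you correctly realize that (EP) is not needed for $m$ (a frame $\tau$ with $P_1(\mu,\tau)$ for the top element $\mu$ of the $\subseteq$-chain $c$ automatically gives $P_1(\sigma,\tau)$ for every $\sigma\in c$, and then $P_1(\Cl(\sigma),\tau)$ by \ref{lem_P_1_for_closure}), and your order-preservation and $G$-equivariance arguments are the same as the paper's. However, there is a genuine gap in your verification of $P_2(m(c),\tau)$. You assert that every antichain in $\bool(\tau)$ is a partial decomposition, ``using $P_2$ which holds because two distinct elements of $\tau$ have no common lower bound, so each $x\in\tau$ lies below at most one member of an antichain in $\bool(\tau)$.'' This is false: the frame axiom only prevents two distinct \emph{atoms} of $\tau$ from having a common lower bound; it says nothing about two incomparable \emph{joins} of subsets of $\tau$, which can perfectly well share an atom. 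For instance, with $\tau=\{\gen{e_1},\gen{e_2},\gen{e_3}\}$ a line decomposition of $\ZZ^3$, the set $\{\gen{e_1,e_2},\gen{e_1,e_3}\}$ is an antichain in $\bool(\tau)$, yet $\gen{e_1}$ lies below both of its members, so $P_2$ fails and this antichain is not a partial decomposition. So ``$m(c)$ is an antichain in $\bool(\tau)$'' does not suffice, and this is precisely where the real content of the lemma lies.

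What the paper does instead is exploit the specific construction of $m(c)$. Suppose $x\in\tau$ and $x\leq y_1$, $x\leq y_2$ with $y_1,y_2\in m(c)$. Since $y_1,y_2\in\bool(\tau)$ have the common lower bound $x$, the frame condition, Eq.~(\ref{eq_def_frame}), guarantees that $y_1\wedge y_2$ exists. Choose $\sigma_1\subseteq\sigma_2$ in the chain $c$ with $y_i\in\min(\Cl(\sigma_i))$; then $y_1\in\Cl(\sigma_1)\subseteq\Cl(\sigma_2)$, so $y_1\wedge y_2\in\Cl(\Cl(\sigma_2))=\Cl(\sigma_2)$, and minimality of $y_2$ in $\Cl(\sigma_2)$ forces $y_1\wedge y_2=y_2$, i.e.\ $y_2\leq y_1$; finally, since both are maximal in $\bigcup_{\sigma\in c}\min(\Cl(\sigma))$, we get $y_1=y_2$. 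This argument — using that the closures along the chain are nested and meet-closed, that the $y_i$ are minimal in them, and that they are maximal in the union — is the step missing from your proposal; without it (or some substitute) the well-definedness claim is not established.
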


\begin{proof}
  If $c,c'\in \Delta \CB(\S,\F)$ are two chains with $c\subseteq c'$, then
  every maximal element of $\bigcup_{\sigma \in c} \min(\Cl(\sigma))$ is contained in a maximal element of $\bigcup_{\sigma \in c'} \min(\Cl(\sigma))$.
  This shows that if both $m(c)$ and $m(c')$ lie in $\red{\PD(\S,\F)}$ then $m(c) \leq m(c')$ in $\red{\PD(\S,\F)}$.
  
  Thus we must prove that for any $c \in \Delta \CB(\S,\F)$ we have $m(c) \in \red{\PD(\S,\F)}$.
  As $m(c)$ is clearly a non-empty subset of $\S$, we need to show that there is $\tau\in \F$ such that $P_1(m(c),\tau)$ and $P_2(m(c),\tau)$ hold.

  Let $\mu$ be the maximal element of $c$. 
  By definition, there is $\tau \in \F$ such that $P_1(\mu,\tau)$ holds, that is,~for all $y\in \mu$, we have $y\in \bool(\tau)$. Since $c$ is ordered by inclusion, we have $P_1(\sigma,\tau)$ for all $\sigma\in c$ and by \ref{lem_P_1_for_closure}, we then also have $P_1(\Cl(\sigma),\tau)$. In particular, this means that for all $y\in m(c)$, we have $y\in \bool(\tau)$, that is,~$P_1(m(c),\tau)$ holds.

 We now show $P_2(m(c),\tau)$, that is,~for every $x\in \tau$, there is at most one $y\in m(c)$ such that $x\leq y$. 
 Assume that $x\leq y_1$ and $x\leq y_2$ for $y_1, y_2 \in m(c)$. As $y_1,y_2\in \bool(\tau)$ and they have the common lower bound $x$, their meet $y_1\wedge y_2$ exists. 
 On the other hand, $y_1,y_2$ are maximal elements of $\bigcup_{\sigma \in c} \min(\Cl(\sigma))$, thus there are $\sigma_1,\sigma_2\in c$ such that $y_1\in \min(\Cl(\sigma_1))$ and $y_2\in \min(\Cl(\sigma_2))$. The chain $c$ is ordered by inclusion, so we can assume that $\sigma_1 \subseteq \sigma_2$.
  This means that $y_1 \in \Cl(\sigma_1)\subseteq \Cl(\sigma_2)$.
  Since $y_2$ is a minimal element of $\Cl(\sigma_2)$ and $y_1\wedge y_2\in \Cl(\Cl(\sigma_2)) = \Cl(\sigma_2)$, we must have $y_1\wedge y_2= y_2$, that is,~$y_1\geq y_2$.
  But $y_1$ and $y_2$ are also maximal elements of the set $\bigcup_{\sigma \in c} \min(\Cl(\sigma))$, so $y_1 = y_2$.

  Finally, taking $\max$, $\min$, unions and $\Cl$ are $G$-equivariant operations, so $m$ is a $G$-equivariant map.
\end{proof}

Under the additional assumption (EP) we now produce a map in the other direction such that
both compositions with $m$ are homotopy equivalences. 

\begin{lemma}
  The map $$u:\left\{ \begin{array}{ccc} \Delta \left(\, \red{\PD(\S,\F)}\,\right) & \to & \CB(\S,\F)\\
  \{\sigma_0 , \ldots , \sigma_t\} & \mapsto & \displaystyle{\bigcup_{i=0}^t \sigma_i} \end{array} \right.$$
  is well-defined if and only if the collection $\F$ satisfies (EP).
  
 Therefore, if (EP) holds, $u$ is an order-preserving $G$-equivariant map of posets.
\end{lemma}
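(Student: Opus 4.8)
The plan is to establish the equivalence by proving each implication separately, and then to note that order-preservation and $G$-equivariance follow formally once well-definedness is in hand. For the direction ``(EP) $\Rightarrow$ $u$ well-defined'', I would argue as follows: an element of $\Delta(\red{\PD(\S,\F)})$ is a chain $\sigma_0<\cdots<\sigma_t$ in $\red{\PD(\S,\F)}$, and \ref{lm:EPproperty} supplies a single frame $\tau\in\F$ with $P_1(\sigma_i,\tau)$ for every $i$. Since $P_1(\sigma_i,\tau)$ means $\sigma_i\subseteq\bool(\tau)$, we get $u(\{\sigma_0,\ldots,\sigma_t\})=\bigcup_{i=0}^t\sigma_i\subseteq\bool(\tau)$, so $P_1(\bigcup_i\sigma_i,\tau)$ holds; hence $\bigcup_i\sigma_i$ is basis compatible and lies in $\CB(\S,\F)$ (it is in particular finite, being a subset of the finite set $\bool(\tau)$). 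Thus $u$ is well-defined.

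For the converse ``$u$ well-defined $\Rightarrow$ (EP)'', I would take $\sigma\leq\sigma'$ in $\red{\PD(\S,\F)}$ and split into two cases. If $\sigma=\sigma'$, then any frame $\tau$ witnessing that $\sigma$ is a partial decomposition already satisfies $P_1(\sigma,\tau)$ and $P_1(\sigma',\tau)$. If $\sigma<\sigma'$, then $\{\sigma,\sigma'\}$ is a two-element chain, hence an element of $\Delta(\red{\PD(\S,\F)})$; well-definedness of $u$ gives $u(\{\sigma,\sigma'\})=\sigma\cup\sigma'\in\CB(\S,\F)$, so there is $\tau\in\F$ with $P_1(\sigma\cup\sigma',\tau)$, and restricting this to $\sigma$ and to $\sigma'$ yields $P_1(\sigma,\tau)$ and $P_1(\sigma',\tau)$ simultaneously. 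This is precisely the statement of (EP).

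It then remains to check, assuming (EP), that $u$ is an order-preserving $G$-equivariant map. Order-preservation is immediate: if $c\subseteq c'$ in $\Delta(\red{\PD(\S,\F)})$, then $u(c)=\bigcup_{\sigma\in c}\sigma\subseteq\bigcup_{\sigma\in c'}\sigma=u(c')$, i.e.,~$u(c)\leq u(c')$ in $\CB(\S,\F)$, which is ordered by containment. For $G$-equivariance, note that $G$ acts on $\Delta(\red{\PD(\S,\F)})$ through its action on $\red{\PD(\S,\F)}$, so $u(\{\sigma_0^g,\ldots,\sigma_t^g\})=\bigcup_i\sigma_i^g=\big(\bigcup_i\sigma_i\big)^g=u(\{\sigma_0,\ldots,\sigma_t\})^g$. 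The only step carrying real content is the first implication, and there the work has already been done in \ref{lm:EPproperty}, whose inductive argument promotes the two-term extension property to arbitrary chains; without it, a single use of (EP) would only cover $t=1$. So I anticipate no genuine obstacle beyond applying \ref{lm:EPproperty} correctly.
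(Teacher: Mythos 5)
Your proof is correct and follows essentially the same route as the paper: the nontrivial direction is exactly an application of \ref{lm:EPproperty}, while the converse, order-preservation and $G$-equivariance are the easy verifications the paper leaves implicit (and which you fill in correctly, including the harmless case split $\sigma=\sigma'$ versus $\sigma<\sigma'$).
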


\begin{proof}
The well-definition equivalence follows from \ref{lm:EPproperty}.
It is also clear that $u$ is order-preserving if (EP) holds.
\end{proof}

Note that by \ref{prop:EPringsWithFC}, if $\S$ is the poset of non-trivial proper free summands of $R^n$ for a commutative ring $R$ and $\F = \F(R^n)$ is as in \ref{sec_ex_free_modules}, then the above lemma states that
$u$ is well-defined (for all $n$) if and only if $R$ is a Hermite ring.

\medskip

We now show that 
\begin{equation*}
    u \circ \Delta m \colon \Delta\Delta\CB(\S,\F) \to \CB(\S,\F) \quad\text{and} \quad m \circ \Delta u \colon \Delta\Delta \left(\,\red{\PD(\S,\F)}\,\right) \to  \red{\PD(\S,\F)}
\end{equation*}
are homotopy equivalences by showing that they are homotopic to homotopy equivalences.
Since after taking geometric realizations, we have $|\Delta m| = |m|$ and $|\Delta u|=|u|$, this will allow us to conclude that both $u$ and $m$ are homotopy equivalences.

\begin{proposition}
\label{prop:homotopyEquivalences}
Suppose (EP) holds for $\F$.
The following hold:
\begin{enumerate}
    \item $u \circ \Delta m \leq \Cl\circ \max[\CB(\S,\F)]\circ \max[\Delta\CB(\S,\F)]$.
    \item $m \circ \Delta u \geq \min[\red{\PD(\S,\F)}] \circ \min[\Delta\left(\,\red{\PD(\S,\F)}\,\right)]$.
\end{enumerate}
In particular, $u \circ \Delta m$ and $m \circ \Delta u$ are $G$-homotopy equivalences, and hence $u$ and $m$ are $G$-homotopy equivalences.
\end{proposition}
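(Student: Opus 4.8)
The plan is to deduce the whole proposition from inequalities (1) and (2) together with two facts already at hand: the criterion that two comparable order-preserving $G$-equivariant maps of $G$-posets have $G$-homotopic geometric realizations (stated just before \ref{prop:homotEquivPosetOrdComplex}), and the fact that $\max[-]$ and $\min[-]$ (by \ref{prop:homotEquivPosetOrdComplex}) and the closure operator $\Cl$ (by the lemma establishing that $\Cl$ is a $G$-homotopy equivalence onto its image, just before \ref{lem_well_defined_m}) induce $G$-homotopy equivalences. Indeed, the right-hand side of (1), the composite $\Cl\circ\max[\CB(\S,\F)]\circ\max[\Delta\CB(\S,\F)]$, is a composition of three $G$-homotopy equivalences and hence is one; since $u\circ\Delta m$ is an order-preserving $G$-equivariant map lying pointwise below it, the comparison criterion makes $u\circ\Delta m$ a $G$-homotopy equivalence as well. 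Symmetrically, $m\circ\Delta u$ is $G$-homotopic to the composite $\min[\red{\PD(\S,\F)}]\circ\min[\Delta(\red{\PD(\S,\F)})]$ of two $G$-homotopy equivalences and so is one. Finally, because $|\Delta m|=|m|$ and $|\Delta u|=|u|$ on geometric realizations, the maps $|u|\circ|m|=|u\circ\Delta m|$ and $|m|\circ|u|=|m\circ\Delta u|$ are $G$-homotopy equivalences; since $g\circ f$ and $f\circ g$ being homotopy equivalences forces $f$ and $g$ to be homotopy equivalences, $u$ and $m$ are $G$-homotopy equivalences. So everything reduces to establishing (1) and (2).

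Inequality (1) is elementary. Given $\alpha=\{c_0\subseteq\dots\subseteq c_s\}\in\Delta\Delta\CB(\S,\F)$, the element $\mu=\max(\max(\alpha))=\max(c_s)$ is the largest simplex occurring in the largest chain of $\alpha$, so that $\sigma\subseteq\mu$ for every $\sigma$ lying in some $c_j$. By definition $u(\Delta m(\alpha))=\bigcup_{j}m(c_j)$, and each element of $m(c_j)$ lies in $\min(\Cl(\sigma))\subseteq\Cl(\sigma)$ for some $\sigma\in c_j$, hence is a meet of a non-empty subset of $\sigma\subseteq\mu$ and therefore belongs to $\Cl(\mu)$. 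Thus $u(\Delta m(\alpha))\subseteq\Cl(\mu)$, which is precisely the asserted inequality in $\CB(\S,\F)$.

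Inequality (2) is the crux. Given $\beta=\{d_0\subseteq\dots\subseteq d_s\}\in\Delta\Delta(\red{\PD(\S,\F)})$, let $\nu=\min(d_0)$ be the $\red{\PD(\S,\F)}$-smallest element of the smallest chain $d_0$, so that the right-hand side of (2) equals $\nu$ on $\beta$. The plan is to prove that $\nu\subseteq\min\big(\Cl(u(d_0))\big)$: since this set is finite, each $x\in\nu$ then lies below a maximal element of $\bigcup_{j}\min(\Cl(u(d_j)))$, that is, below an element of $m(\Delta u(\beta))$, which is exactly $\nu\le m(\Delta u(\beta))$. To see that $x\in\nu$ is minimal in $\Cl(u(d_0))$, take $z\in\Cl(u(d_0))$ with $z\le x$, write $z=\bigwedge_{w\in W}w$ for a non-empty $W\subseteq u(d_0)$, and invoke \ref{lm:EPproperty} (this is the only use of (EP)) to fix a frame $\tau$ with $P_1(\sigma,\tau)$ for all $\sigma\in d_0$, so that $x=\bigvee_{x'\in A}x'$ for a non-empty $A\subseteq\tau$. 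The auxiliary statement to isolate is that \emph{two distinct members of a partial decomposition have no common lower bound in $\S$}: by the frame axiom such a lower bound would force their meet to exist and lie in $\bool(\tau')$ for a frame $\tau'$ witnessing the partial decomposition, hence to dominate an element of $\tau'$ that is below both members, contradicting $P_2$. Now for $x'\in A$ and $w\in W$, pick $\sigma_w\in d_0$ with $w\in\sigma_w$; from $\nu\le\sigma_w$ obtain $x''\in\sigma_w$ with $x\le x''$; then $z\le w$ and $z\le x\le x''$ exhibit $z$ as a common lower bound of $w,x''\in\sigma_w$, so the auxiliary statement forces $w=x''$ and thus $x'\le x\le w$. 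Ranging over $w\in W$ gives $x'\le\bigwedge_{w\in W}w=z$, and ranging over $x'\in A$ gives $x=\bigvee_{x'\in A}x'\le z$, whence $z=x$.

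I expect the main obstacle to be inequality (2): pinning down the right auxiliary statement about partial decompositions and then threading it through the meet $z=\bigwedge_{w\in W}w$ to prove minimality of the elements of $\nu$ inside $\Cl(u(d_0))$. Inequality (1), the finiteness remarks, and the concluding assembly of the homotopy equivalences are routine once the comparison criterion and \ref{prop:homotEquivPosetOrdComplex} are available.
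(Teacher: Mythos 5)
Your proposal is correct and takes essentially the same route as the paper: the same two pointwise comparisons with $\Cl\circ \max[\CB(\S,\F)]\circ\max[\Delta\CB(\S,\F)]$ and $\min[\red{\PD(\S,\F)}]\circ\min[\Delta(\red{\PD(\S,\F)})]$, assembled in the same way via the comparison criterion, \ref{prop:homotEquivPosetOrdComplex}, the closure lemma for $\Cl$, and $|\Delta m|=|m|$, $|\Delta u|=|u|$. The only difference is one of detail: the paper merely asserts the key containment $\min(b)\subseteq\min\bigl(\Cl\bigl(\bigcup_{\sigma\in b}\sigma\bigr)\bigr)$ with a ``Note that...'', whereas you prove it explicitly via the auxiliary fact that two distinct members of a partial decomposition have no common lower bound, which is exactly the justification the paper leaves implicit (and your intermediate step $x\leq w$ for all $w\in W$ already gives $z=x$ directly by maximality of the meet among lower bounds, so the final ``ranging'' steps are not even needed).
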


\begin{proof}
We write down the composition for item (1). Let $\alpha\in  \Delta\Delta\CB(\S,\F)$.
Then
\begin{align*}
 u\left(\, \Delta m(\alpha) \, \right) & = u\Big(\, \big\{\, m(a) : a\in \alpha\,\big\}\,\Big)\\
 & = \displaystyle{u\Big(\,\big\{\, \max\big(\, \bigcup_{\sigma \in a} \min \Cl(\sigma)\, \big) : a\in \alpha\,  \big\}\,\Big)}\\
 & = \displaystyle {\bigcup_{a\in \alpha} \max\Big(\, \bigcup_{\sigma \in a} \min \Cl(\sigma)\, \Big)}\\
 & \subseteq \bigcup_{a\in \alpha} \bigcup_{\sigma \in a} \min \Cl(\sigma)\\
 & = \bigcup_{\sigma \in \max(\alpha)} \min \Cl(\sigma)\\
 & \subseteq \bigcup_{\sigma \in \max(\alpha)} \Cl(\sigma)\\
 & = \Cl\Big(\,\max[\CB(\S,\F)]\big(\,\max[\Delta\CB(\S,\F)](\alpha)\,\big)\,\Big). 
\end{align*}
This proves item (1).

Now we show that the inequality of item (2) holds.
Let $\beta \in \Delta\Delta \left(\,\red{\PD(\S,\F)}\,\right)$.
Then
\begin{equation*}
\displaystyle{m\left( \,\Delta u(\beta)\, \right) = m\Big(\,\big\{\, \bigcup_{\sigma\in b} \sigma\, :\, b\in \beta \,\big\}\,\Big) = \max\Big(\, \bigcup_{b\in \beta} \min\big(\, \Cl (\, \bigcup_{\sigma\in b} \sigma \,)\,  \big)\,  \Big)}.
\end{equation*}
Note that if $\sigma\in \red{\PD(\S,\F)}$ then $\Cl(\sigma) = \sigma$, and if $b\in \Delta\red{\PD(\S,\F)}$ then
\[\displaystyle{\min(b)\subseteq \min\Big(\, \Cl\big(\,\bigcup_{\sigma\in b} \sigma\,)\, \Big)},\]
and so
\[ \displaystyle{\bigcup_{b\in \beta} \min(b) \subseteq \bigcup_{b\in \beta}  \min\Big(\, \Cl \big(\, \bigcup_{\sigma\in b} \sigma \,\big)  \,\Big) }.\]
In particular, this shows that the maximal elements of the left-hand side are contained in the maximal elements of the right-hand side.
That is, the left-hand side is finer than the right-hand side.

The element $\beta$ consists of chains of partial decompositions. Let $\{ \min(b) : b\in \beta\}$ be the set of minimal elements of these chains. It is itself a chain of partial decompositions, so an element in $\Delta\red{\PD(\S,\F)}$. Its maximum is the minimal element of the shortest, that is,~minimal, chain in $\beta$.
Thus we have proved
\begin{align*}
\min[\red{\PD(\S,\F)}]\big(\,\min[\Delta\left(\,\red{\PD(\S,\F)}\,\right)]& (\beta)\,\big) = \max\Big(\,\big\{\, \min(b)\, :\, b\in \beta\,\big\}\,\Big) = \max\big( \bigcup_{b\in \beta} \min(b) \big)\\
 & \leq \max\big( \bigcup_{b\in \beta} \min\big( \Cl \left( \cup_{\sigma\in b} \sigma \right)  \big)  \big)
 = m \left( \Delta u(\beta)\right).
\end{align*}
It follows that $\min[\red{\PD(\S,\F)}] \circ \min[\Delta\left(\,\red{\PD(\S,\F)}\,\right)] \leq m \circ  \Delta u$.
\end{proof}

Now the proof of \ref{thm:main} is immediate.

\begin{proof}[Proof of \ref{thm:main}]
  By \ref{prop:homotopyEquivalences} any of the maps $u$ and $m$ provides a $G$-homotopy equivalence between 
  $\CB(\S,\F)$ and $\red{\PD(\S,\F)}$.
\end{proof}

\end{document}